\title[On the integrability of the shift map]{on the integrability of the shift map on twisted pentagram spirals}
\author{Gloria Mar\'i Beffa}
\thanks{This paper is supported by the author NSF grant DMS \#0804541 and \#1405722}
\address{Mathematics Department, University of Wisconsin-Madison, WI 53706\\ \rm{maribeff@math.wisc.edu}}
\newtheorem{theorem}{Theorem}
\numberwithin{theorem}{section}
\newtheorem{lemma}[theorem]{Lemma}
\newtheorem{comment}[theorem]{Comment}
\theoremstyle{definition}
\newtheorem{definition}[theorem]{Definition}
\def\RP{\mathbb {RP}}
\def\R{\mathbb R}
\def\Smap{\mathcal S}
\def\G{\mathcal{G}}
\def\C{\mathcal{C}}
\def\TS{\mathrm{TS}}
\def\PSL{\mathrm{PSL}}
\def\A{\mathcal{A}}
\def\B{\mathcal{B}}
\def\T{\overline{T}}
\def\V{\widetilde{V}}
\def\tc{\tilde{c}}
\def\tb{\tilde{b}}
\def\l{\lambda}
\def\hl{\widehat{\l}}
\def\be{\beta}
\def\b{{b}}
\def\cg{{c}}
\def\ag{{a}}
\def\ha{\widehat{\ag}}
\def\hb{\widehat{\b}}
\def\hc{\widehat{c}}
\def\hV{\widehat{V}}
\def\tV{\widetilde{V}}
\def\al{\alpha}
\def\be{\beta}
\begin{document}
\maketitle
\begin{abstract} In this paper we prove that the shift map defined on the moduli space of twisted pentagram spirals of type $(N, 1)$ possesses a non-standard Lax representation with an associated monodromy whose conjugation class is preserved by the map. We prove this by finding a coordinate system in the moduli space of twisted spirals, writing the map in terms of the coordinates and associating a natural parameter-free non-standard Lax representation. We then show that the map is invariant under the action of a $1$-parameter group on the moduli space of twisted $(N,1)$ spirals, which allows us to construct the Lax pair. We also show that the  monodromy defines an associated Riemann surface  that is preserved by the map. We use this fact to generate invariants of the shift map. 
 \end{abstract}
\section{Introduction}
The pentagram map is defined on planar, convex $N$-gons. The map $T$ takes a polygon with vertices $p_k$ to the polygon with vertices formed by intersecting two segments: one created by joining the vertices to the right and to the left of the original one, $\overline{p_{k-1}p_{k+1}}$, the second one by joining the original vertex to the second vertex to its right $\overline{p_kp_{k+2}}$ (see Fig. 1). These newly found vertices form a new $N$-gon. The pentagram map takes the first $N$-gon to this newly formed one. (The name pentagram map comes from the star formed in Fig. 1 when applied to pentagons.) As surprisingly simple as this map is, it has an astonishingly large number of properties.

\vskip 2ex
\centerline{\includegraphics[height=2in]{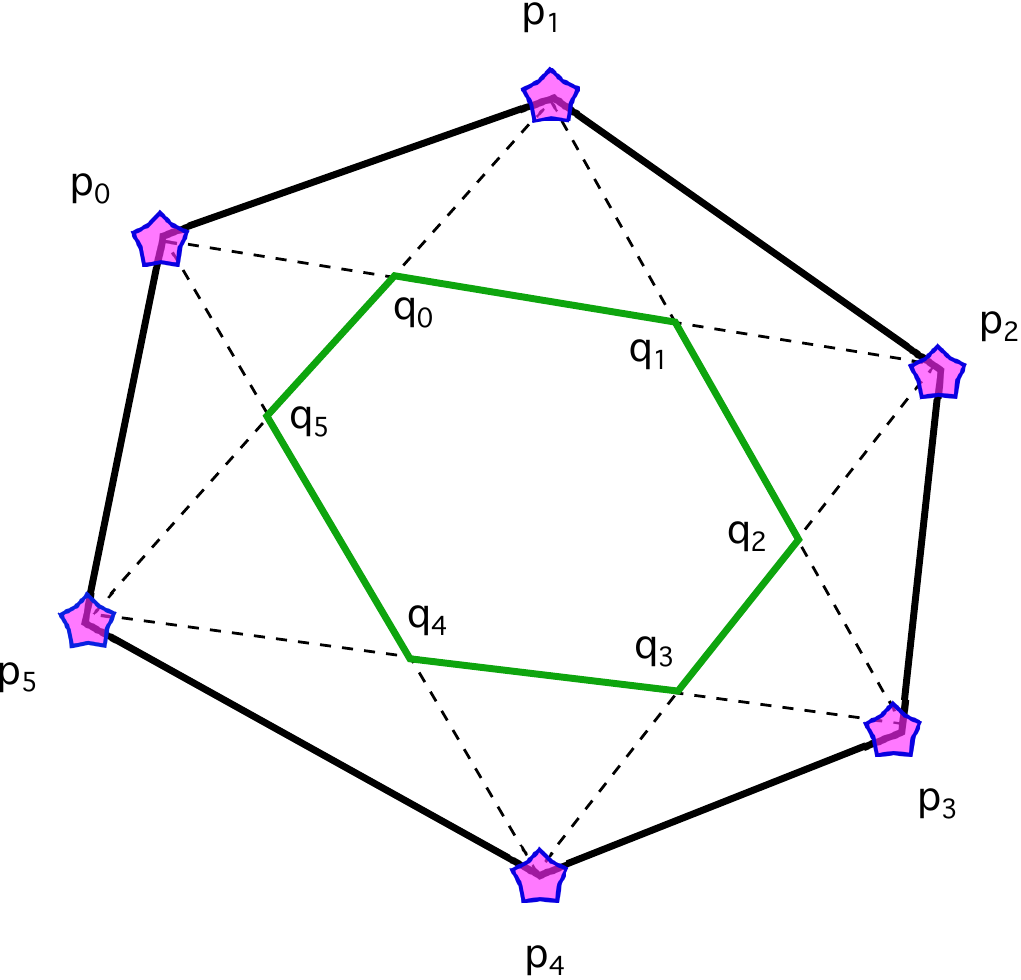}}
\vskip 1ex
\centerline{Figure 1: the pentagram map on hexagons}
\vskip 2ex

 It is a classical fact that if $P$ is a pentagon, then $T(P)$ is projectively equivalent to $P$. It also seems to be classical that if $P$ is a hexagon, then $T^2(P)$ is projectively equivalent to $P$ as well. The constructions performed to define the pentagram map can be equally carried out on the projective plane so we assume this is where the polygons live. When defined on the moduli space of pentagons (the set of equivalence classes up to the projective action, as described by the projective invariants of the polygons) { $T$ is the identity, while it is an involution when defined on the moduli space of hexagons}. In general, one should not expect to obtain a closed orbit for any $N$; in fact orbits exhibit a quasi-periodic behavior classically associated to completely integrable systems. This was conjectured in \cite{S3}.

 The author of \cite{S3} defined the pentagram map on what he called  {\it $N$-twisted polygons}, that is, infinite polygons with vertices $p_k$, for which $p_{N+k} = M(p_k)$ for all $k$, where $M$ is the {\it monodromy}, a projective automorphism of $\RP^2$. 
Following a somehow dormant period the pentagram map has come back with full force after the publication of \cite{OST} where the authors proved that the map on twisted polygons  is not only completely integrable, but its continuous limit is the Boussinesq equation, a well known completely integrable PDE modeling certain dynamics of waves. A large number of publications have followed this, proving the integrability of the map on closed polygons (\cite{OTS2}, \cite{FS}), defining and proving integrability of generalizations (\cite{GSTV}, \cite{KS}, \cite{MB1},  \cite{MB2}), establishing connections to cluster algebras (\cite{Glick}) and more. During the last year Schwarz defined two new maps, the heat map on closed polygons (\cite{heat}) and the shift map on pentagram spirals (\cite{Schspiral}). Here we will focus on pentagram spirals and their shift maps.

In \cite{Schspiral} Schwartz defined what he called a pentagram spiral, a family of bi-infinite polygons in the projective plane that spiral inside and outside of themselves following a pentagram map-type of construction. The spiral is determined by a {\it seed}, vertices of a closed polygon together with a number of points on the sides of the closed polygon that mark the moment when the polygons start spiraling (see fig. 2, the stars mark the closed polygon, squares are the points on the sides). Spirals are classified by two numbers $(N,k)$ where $k$ is the number of  points on the sides in the $N$-closed polygon. Fig. 2 shows a $(5,2)$ spiral. A pentagram spiral has at most one seed point per side, and $k$ marks the number of spiraling branches that the pentagram spiral has.
\vskip 3ex
\centerline{\includegraphics[height=3.1in]{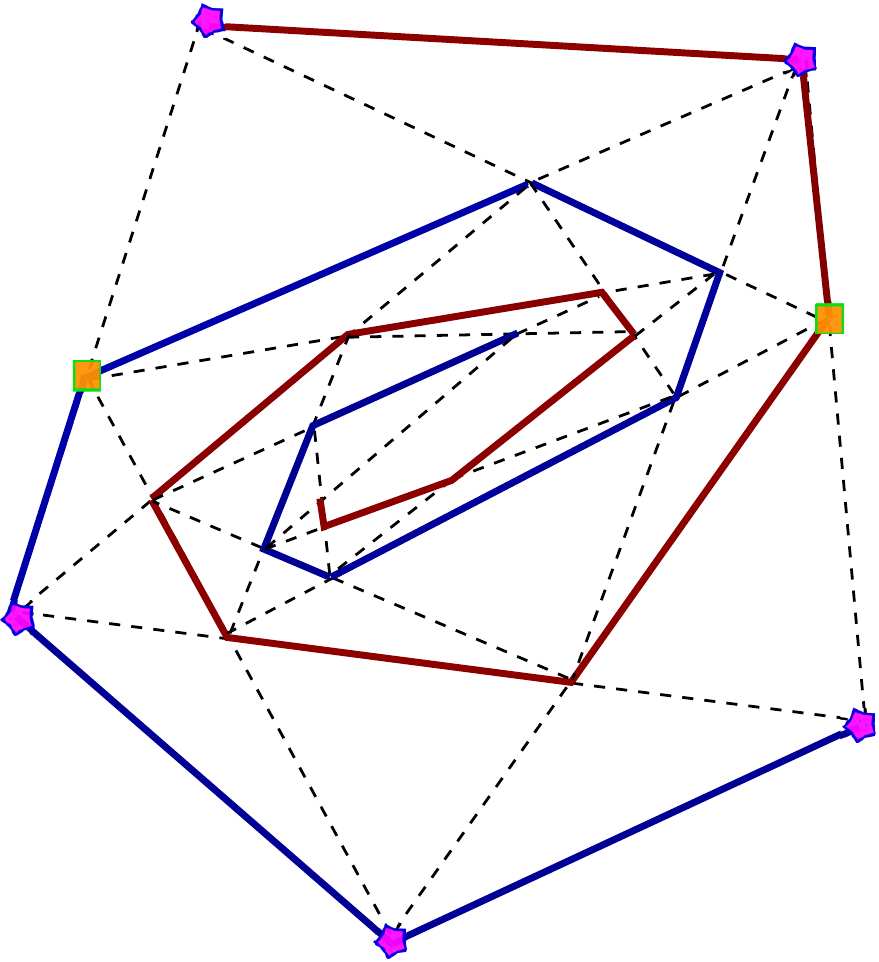}}
\centerline{Figure 2: a pentagram spiral of type $(5,2)$}
\vskip 2ex
In \cite{Schspiral} Schwartz studied the shift map on pentagram spirals, the map that assigns to each spiral the one obtained by shifting the vertices (and hence the seed) once forward, forward for us being the direction towards the interior of the polygon. He proved that such a map can be thought of, in a certain sense, as the $(N+1)$th root of the original pentagram map, and he also conjectured that, like the pentagram map, the shift map is also completely integrable.

In this paper we study the case of twisted $(N,1)$ spirals, denoted by $\TS(N,1)$; these are spirals where a monodromy map is applied each time the pentagram map acts after a full period $N+1$. The general case $(N,k)$ is now in progress. The paper is divided into several parts. In the first part we find a generating family of projective invariants for generic elements in $\TS(N,1)$, and we use them to define a coordinate system in the moduli space of $\TS(N,1)$.  We also use them to describe a parameter-free non-standard Lax representation for the shift map. By non-standard we mean that the boundary conditions are not periodic. The invariants will be found similarly to those of twisted polygons: we choose an appropriate lift of the spiral to $\R^3$ by imposing a number of normalizations. The lift defines a discrete moving frame for the spiral (in the sense of \cite{MMW}) and it provides us with a complete set of generating and independent projective invariants for twisted spirals which define a coordinate system in the moduli space. The lift exists only if $N\ne 3s+1$ for any $s$, an assumption we make from then on.  This is done in section 3, with theorem \ref{invariants} describing the coordinate system (the proof of this Theorem is too technical and appears in the Appendix). We then notice that once we shift the spiral the invariants will not merely be shifted. Indeed, the lift for the shifted spiral will have different normalization conditions and hence it will be different from the original lift (this is expected since the shifted spiral has shifted seeds).

In lemma \ref{lemma1} we prove that there are two proportions $\al$ and $\be$, determined by the two changes of seed when shifted - at the beginning and the end of the polygon -, such that the shifted lift equals the original lift times certain powers of $\al$ and $\be$ (again, the proof of this lemma appears in the Appendix). The powers  depend on the vertex and they recur every three vertices, except for the last ones. Lemma \ref{one} shows that the generating invariants also transform by shifting and multiplying by powers of $\al$ and $\be$. This is true for most invariants with the exception of those associated to the end seed point, the vertex where the polygon starts to spiral. Those end points need to be treated carefully. 
 Lemma \ref{one} and the proof of theorem \ref{scalingth} gives an explicit formula for the shift map in these coordinates.  (We also describe, towards the end, a different set of coordinates for which the shift map  is simply a shift of coordinates, except for the coordinates of the last vertex for which the map is highly complicated.)
 
 The last step is to prove that $\al$ and $\be$ are invariant under a certain $1$-parameter group action on the moduli space defined through scaling of the invariants (the same scaling used for the pentagram map) and to use this to show that the shift map is left invariant by that action. This is done in Theorem \ref{scalingth}. Introducing the scaling in the parameter-free Lax representation produces a valid non-standard Lax pair that can be used to generate invariants.  This and the  description of the preserved quantities are given in our last section, where, as an example, we generate invariants of the map for $N = 5$.

The author would like to thank R. Schwarz for continuous conversations. This paper has been  supported by NSF grants DMS \#0804541 and \#1405722.

\section{Background}
\subsection{The pentagram map and pentagram spirals} The pentagram map was originally defined by the author of \cite{S1} as a map defined on the space of closed $N$-polygons in the projective plane
\[
T: \C(N) \to \C(N).
\]
Given a closed $N$-polygon $\{p_i\}_{n=0}^{N-1}$, $p_i\in \RP^2$, we define the image of this polygon by the pentagram  map as $T(\{p_i\}) = \{q_i\}$ where \[q_i = \overline{p_{i-1}p_{i+1}}\cap\overline{p_ip_{i+2}}\] where $p_{N+k} = p_k$ (See Fig. 1.)  The map can equally be thought to act on the vertices of the polygons as $T(p_i) = q_i$. 
\begin{definition} (Twisted polygon) Let $P=\{p_i\}$ be an infinite polygon in the projective plane. We say that $P$ is an  {\it $N$-twisted polygon}, whenever there is a projective transformation $M$ such that
\[
p_{N+i} = M(p_i)
\]
for all $i$. $M$ is called the {\it monodromy}.
\end{definition}
One can easily check that the pentagram map is well defined on the space of twisted polygons. 

The idea of a pentagram spiral is built upon a choice of points on the sides of a closed polygon from which the polygon starts spiraling inside itself using pentagram transformations of consecutive vertices. We need merely one side point to start creating a branch of the spiral (see fig. 3), but one can, in principle, choose one point in several sides, creating the branching of several spirals, each one created using the pentagram image of the previous branch (see fig. 2). The vertices of the original polygon upon which the construction is built, together with the points on the sides form what the author of \cite{Schspiral} called {\it the seed} of the spiral. See \cite{Schspiral} for more details. A spiral built with the seed of an $N$-gon with $k$ distinguished points on $k$ different sides is called a pentagram spiral of type  $(N,k)$. In this paper we will focus on spirals of type $(N,1)$.

 \begin{definition} ($(N,1)$ Pentagram spiral) Given an N-polygon in $\RP^2$, $\{p_1, \dots, p_N\}$, and a point $p_{N+1}$ {\it on the side joining $p_N$ and $p_1$}, we define the $(N,1)$ pentagram spiral associated to the seed $\{p_1, \dots, p_N; p_{N+1}\}$ as the bi-infinite polygon with ordered vertices 
 \begin{equation}\label{spiral}
 \{, \dots, T^{-1}(p_{N-1}), T^{-1}(p_N), T^{-1}(p_{N+1}), p_1, p_2, \dots, p_N, p_{N+1}, T(p_1), T(p_2), \dots \}.
 \end{equation}
Let us call $p_{N+i}= T(p_{i-1})$ and $p_{-i} = T^{-1}(p_{N-i+1})$, for $i\ge1$ so that $p_{N+1} = T(p_0)$. Figure 3 shows a standard $(6,1)$-pentagram spiral.
 \end{definition}
 
\centerline{\includegraphics[height=4in]{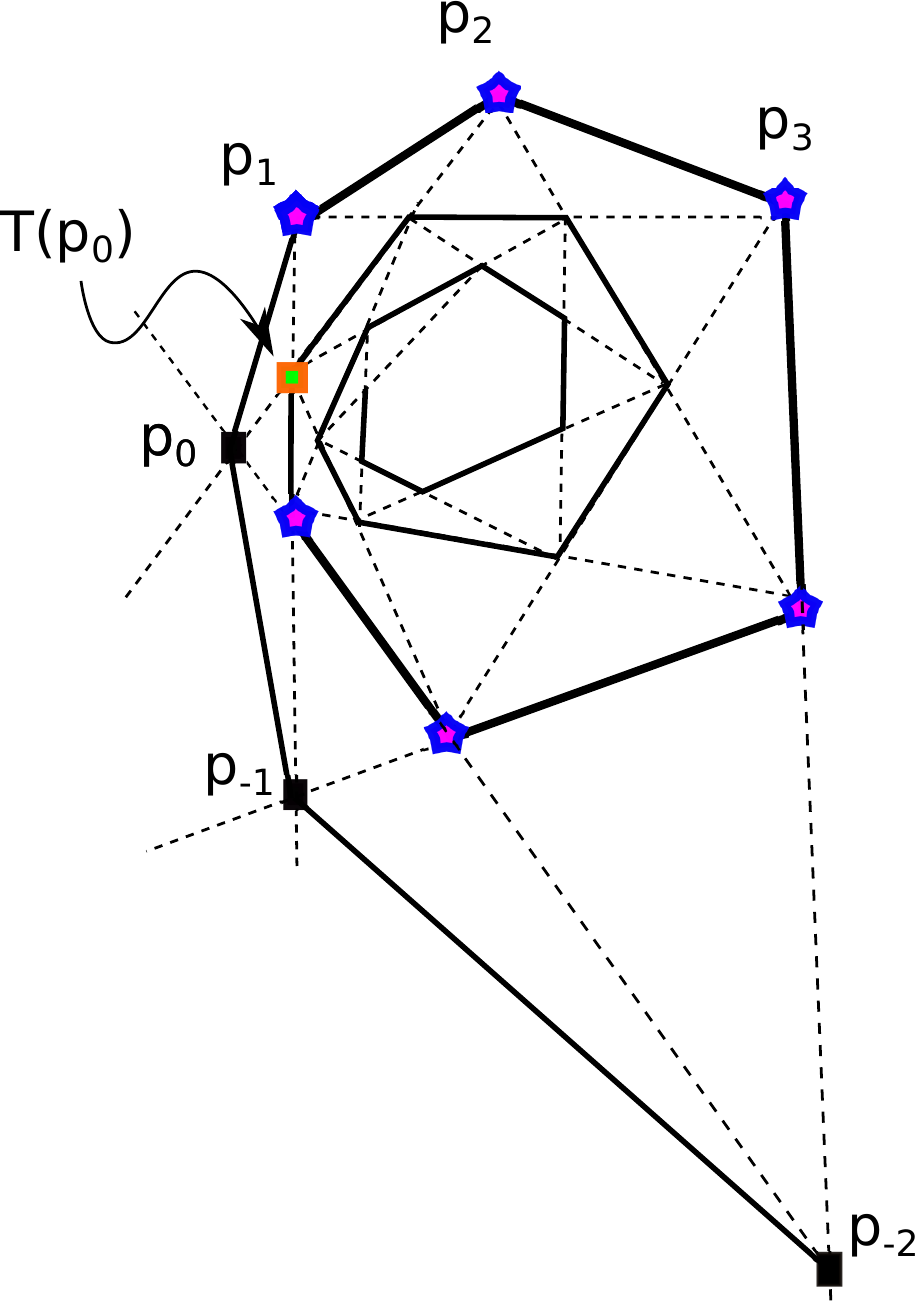}}

\centerline{ Figure 3: a pentagram spiral of type $(6,1)$}

\vskip 2ex
\begin{comment}{\rm Notice that from 
\[
p_{N+2} = T(p_1) = \overline{p_0p_2}\cap \overline{p_1p_3}, \quad\quad p_0 = T^{-1}(p_{N+1})= \overline{p_{N-1}p_N}\cap\overline{p_{N+1}p_{N+2}}
\]
one might conclude that the spiral is not well-defined. To resolve this problem we note that
\[
T(p_1) = \overline{p_{N+1}p_2}\cap \overline{p_1p_3} = \overline{p_0p_2}\cap \overline{p_1p_3}
\]
the equality being clear from Figure 3.
}
\end{comment}
The moduli space of $(N,1)$ pentagram spirals was proven to be $2N-7$ dimensional in \cite{Schspiral}. In order to facilitate the creation of a Lax representation for the shift map on spirals we will consider {\it twisted} pentagram spirals by introducing a monodromy. The author of \cite{Schspiral} also defined the concept of twisted spiral, with a more abstract approach, and proved that its moduli space was $2N+1$ dimensional. Although described differently, both concepts coincide.
\begin{definition} (Twisted pentagram spirals) Given an N-polygon in the projective plane $\RP^2$, $\{p_1, \dots, p_N\}$, an element of the projective group $M\in \PSL(3,\R)$, and a point $p_{N+1}$ in the segment joining $p_N$ and $M\cdot p_1$,  we define the {\it twisted pentagram spiral} associated to the seed $\{p_1, \dots, p_N; p_{N+1}\}$ with monodromy $M$ as the infinite polygon with ordered vertices 
\begin{multline}\label{twistedspiral}
\{, \dots, M^{-1}\cdot T^{-1}(p_{N-1}), M^{-1}\cdot T^{-1}(p_N), p_0, p_1, p_2, \dots, p_N, M\cdot T(p_0), \\ M\cdot T(p_1), M\cdot T(p_2),  \dots\},
\end{multline} where $p_{N+1} = M\cdot T(p_0)$, $p_{N+i}= M\cdot T(p_{i-1})$, $p_{-i} = M^{-1}\cdot T^{-1}(p_{N-i+1})$,  for $i\ge 1$, and where the {\it monodromy} $M$ acts each time a period $N+1$ is completed and $T$ is applied. 
\end{definition}
Next we will prove that the moduli space of twisted pentagram spirals is a space of dimension $2N+1$ and we will describe a generating set of invariants that will define coordinates in it. 

\section{ The moduli space and the shift map}
\subsection{A coordinate system for the moduli space of twisted pentagram spirals} 
The moduli space of twisted $N$-polygons in $\RP^2$ has been well studied in \cite{OST}, where the authors proved that the space has dimension $2N$. They also described a coordinate system defined by a basis of projective discrete invariants of polygons. It is defined as follows: 

Assume that we have a twisted $N$-polygon $\{p_i\}$, with a monodromy $M$. One can prove (\cite{OST}) that if $N\ne 3s$ for all $s$, then there exist unique lifts of $p_i$ to $\R^3$, call them $V_i$, such that
\[
\det(V_i, V_{i+1}, V_{i+2}) = 1
\]
for all $i$.  Under these conditions, one can always find invariants $e_i$, $f_i$ satisfying the relation
\begin{equation}\label{penta}
V_{i+3} = e_i V_{i+2}+f_i V_{i+1}+V_i
\end{equation}
for all $i$, where $e_i, f_i$ are functions of the vertices of the polygon. The functions $e_i$, $f_i$, $i=1,2,\dots, N$, define a coordinate system for their moduli space since they determine the polygon up to the projective action of $\PSL(3,\R)$ (which is linear on $V_i$). In this section  we define the analogous set of coordinates for pentagram spirals. The key to this definition and to the existence of the Lax pair is to choose the lifts of $T(p_i)$ and $T^{-1}(p_i)$ appropriately to guarantee scaling invariance of the shift map, and the existence of the Lax representation, which is our ultimate goal. If one chooses more straightforward lifts, scaling invariance is not guaranteed.

\begin{theorem}\label{lift} Let $P$ be a twisted pentagram spiral and assume that $N \ne 3s+1$. Then there exists a unique lift of the seed $\{p_1, p_2, \dots, p_N; p_{N+1}\}$ to $\{V_1, \dots, V_N; V_{N+1}\}$ and a unique lift of the spiral $P$ to the polygon in $\R^3$ with vertices $\{V_i\}_{-\infty}^{+\infty}$
\[
\dots, V_{-1}, V_0, V_1, V_2, \dots, V_N, V_{N+1}, V_{N+2},  \dots 
\]
such that:

\begin{equation}\label{TVlift}
V_{N+i} = M T(V_{i-1}) \quad\hbox{\rm with}\quad T(V_i) = \left( V_{i-1} \times V_{i+1}\right)\times \left(V_i\times V_{i+2}\right);
\end{equation}

\begin{equation}\label{TinvV}
V_{-i} = M^{-1} \T(V_{N-i+1})\quad\hbox{\rm with}\quad\T(V_i) = c_{i+1} \left(V_i\times V_{i+1}\right)\times\left(V_{i-2}\times V_{i-1}\right)
\end{equation}
where $c_i = \displaystyle\frac{\det(V_{i+1}, V_{i+2}, V_{i+3})}{\det(V_i, V_{i+1}, V_{i+2})}$; and 
\begin{equation}\label{normalization}
\det(V_i, V_{i+1}, V_{i+2}) = 1
\end{equation}
for $i=0,1,2,\dots,N$. 
\end{theorem}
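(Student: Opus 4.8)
The plan is to reduce the statement to the invertibility of an explicit square matrix and to identify its degeneracy locus with $N=3s+1$. Recall that a point of $\RP^2$ lifts to $\R^3$ uniquely up to a nonzero scalar, and that $M$ has a unique representative in $\SL(3,\R)$; since $\det M=1$, one checks that the operations $T$ and $\T$ of (\ref{TVlift})--(\ref{TinvV}) commute with the $\SL(3,\R)$-action, so the recursions are consistent with the $(N+1)$-quasi-periodicity of the spiral. I would fix arbitrary reference lifts $\hat V_0,\hat V_1,\dots,\hat V_N$ of $p_0,p_1,\dots,p_N$ (chosen so that the relevant determinants are positive) and let $\hat V_m$ for the remaining indices be whatever (\ref{TVlift})--(\ref{TinvV}) produce; three things then need checking. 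First, that for a spiral in general position the cross products never vanish, so the $\hat V_m$ are well defined. Second, that $\hat V_m$ lies over $p_m$: this is immediate for $T$, since $(V_{i-1}\times V_{i+1})\times(V_i\times V_{i+2})$ lies on the lines $\overline{p_{i-1}p_{i+1}}$ and $\overline{p_ip_{i+2}}$; and for $\T$ it holds because $(V_i\times V_{i+1})\times(V_{i-2}\times V_{i-1})$ lies on $\overline{p_ip_{i+1}}$ and $\overline{p_{i-2}p_{i-1}}$, which is exactly the vertex-wise inverse pentagram construction. Third, that there is no real circularity: although $V_{N+1}=MT(V_0)$ seems to require $V_{-1}=M^{-1}\T(V_N)$, which through the factor $c_{N+1}$ seems to require $V_{N+1}$ back, the determinant factor $c_{i+1}$ is arranged so that the scale of $\T(V_N)$ is independent of $V_{N+1}$ (its direction being fixed by $p_{N-2},p_{N-1},p_N,p_{N+1}$ alone). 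Resolving this circularity is one role of the $c$-factor in (\ref{TinvV}).

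Next I would turn on the scaling freedom. Every lift is $V_m=\lambda_m\hat V_m$, and since $T$ and $\T$ are homogeneous of known multidegree in their vector arguments --- writing $w_m=\log\lambda_m$, the weight of $T(V_i)$ is $w_{i-1}+w_i+w_{i+1}+w_{i+2}$, and, once $c_{i+1}$ is included, the weight of $\T(V_i)$ is $w_{i-2}+w_{i-1}+w_i+w_{i+4}$ --- the weight of every vertex of the spiral is forced to be an explicit $\Z$-linear combination of the free weights $w_0,\dots,w_N$, obtained by iterating these two rules along the recursions. The normalization conditions (\ref{normalization}) then split into $N-1$ ``bulk'' equations $w_i+w_{i+1}+w_{i+2}=\beta_i$ for $i=0,\dots,N-2$, with $\beta_i$ depending on the reference lift, together with two ``boundary'' equations, $i=N-1$ and $i=N$, which involve the propagated vertices $V_{N+1}$ and $V_{N+2}$ and whose expansions in $w_0,\dots,w_N$ are read off from the weight computation. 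Altogether this is a square inhomogeneous linear system $A\mathbf{w}=\mathbf{b}$ of size $N+1$, and a unique lift exists exactly when $\det A\neq0$.

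The main point, and the main obstacle, is to show that $\det A\neq0$ precisely when $N\neq3s+1$. The bulk rows form the banded system whose non-degeneracy, in the twisted-polygon case of \cite{OST}, fails exactly when $3$ divides $N$; their homogeneous solutions are the period-three sequences of zero sum, a two-dimensional space. The key observation is that, modulo a bulk relation, the weight $w_{i-1}+w_i+w_{i+1}+w_{i+2}$ of $MT(V_i)$ is congruent to $w_{i+2}$, so the quasi-periodicity $V_{i+N+1}=MT(V_i)$ becomes, at the level of scalings, $w_m\equiv w_{m-(N-1)}$ --- an effective closure of period $N-1$ rather than $N$. A nonzero period-three, zero-sum sequence survives this closure if and only if $3\mid(N-1)$, so $A\mathbf{w}=0$ has a nonzero solution exactly when $N\equiv1\pmod3$; I would confirm this by substituting the two-parameter period-three form into the two boundary equations and checking, by a short case analysis on $N\bmod3$, that they are vacuous when $N\equiv1\pmod3$ and force $\mathbf{w}=0$ otherwise. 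Granting $\det A\neq0$, the system $A\mathbf{w}=\mathbf{b}$ has a unique solution, which gives a unique lift of $\{p_0,p_1,\dots,p_{N+1}\}$, and (\ref{TVlift})--(\ref{TinvV}) extend it uniquely to the whole spiral; conversely any lift satisfying (\ref{TVlift})--(\ref{normalization}) arises this way, giving uniqueness. The delicate parts are the bookkeeping of the weight exponents near the seam, i.e.\ for indices around $0$ and $N+1$, where the forward and backward recursions interlock and the $c$-factors of $\T$ refer to forward-propagated vertices, and the explicit evaluation of $\det A$; this is presumably why the argument is deferred to the Appendix.
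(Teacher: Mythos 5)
Your proposal is correct and follows essentially the same route as the paper: fix an arbitrary reference lift, encode the unknown scalings logarithmically, and reduce existence and uniqueness to the invertibility of an $(N+1)\times(N+1)$ linear system coming from the normalizations together with the weight bookkeeping of $T$ and $\T$ (your weights for $T(V_i)$ and for $\T(V_i)$ including the $c_{i+1}$ factor match the paper's relations (\ref{lambdas}) and (\ref{lm1}) exactly). The only divergence is the final linear-algebra step --- the paper iteratively row-reduces the coefficient matrix to get determinant $0$, $3$, $3$ according to $N\bmod 3$, while you analyze the kernel via period-three zero-sum sequences and the effective period-$(N-1)$ closure --- and your proposed case analysis on $N\bmod 3$ does check out, including at the seam where $w_{N+1}$ is routed through $w_{-1}$ and $w_{N+4}$.
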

\begin{comment}{\rm First of all, notice that we are abusing notation by using the letter $T$ for both the projective map and its lift. The domain should make clear which one is which. Notice also, that the lift of $T$ to $V_i$ is a convenient one chosen from an infinite number (any invariant multiple will do). Finally, as it will become clear later, $\T$ is not equal, but proportional, to the inverse of the lift $T$, with the proportion having an important role in the coordinate description of the shift map and on its scaling invariance.  Some other lifts of $T^{-1}$ also work, but the one chosen here will make our calculations the simplest.
}
\end{comment}
\begin{proof} 
Let $\V_k$ be any lift of  $p_i$, $i=0,1,\dots,N$, with $p_0 = T^{-1}(M^{-1}p_{N+1})$. Assume the lift we are looking for is of the form $V_k = \l_k \V_k$. From the definition of $T(V_i)$ in the statement of the theorem, it is clear that 
\[
T(V_i) = \l_{i-1}\l_i\l_{i+1}\l_{i+2} \widetilde{T(V_i)}
\]
where $\widetilde{T(V_i)} = \left( \V_{i-1} \times \V_{i+1}\right)\times \left(\V_i\times \V_{i+2}\right)$. Hence, and since $V_{N+k} = M T(V_{k-1})$, we can define
\begin{equation}\label{lambdas}
\l_{N+1} = \l_{-1}\l_0\l_1\l_2 \quad \hbox{\rm and}\quad \l_{N+k} = \l_{k-2}\l_{k-1}\l_k\l_{k+1} \quad\hbox{for all} ~k=2,\dots,
\end{equation}
where $\l_{-1}$ satisfies $V_{-1} = \l_{-1} \V_{-1}$, with $V_{-1} = M^{-1}\T(V_N)$ and $\V_{-1}=M^{-1}\widetilde{T(V_{N})}$. The tilde will always indicate that we are evaluating the map on the $\{\V_i\}$ lift. Let us find $\l_{-1}$ first. Since
\[
M V_{-1} = \T(V_N) = c_{N+1} \left(V_N\times V_{N+1}\right)\times\left(V_{N-2}\times V_{N-1}\right)
\]
from the definition of $c_i$ in the statement of the theorem, we have that
\[
c_{N+1} = \frac{\l_{N+4}}{\l_{N+1}} \tilde c_{N+1}.
\]
From here
\[
V_{-1} = \frac{\l_{N+4}}{\l_{N+1}} \l_N\l_{N+1}\l_{N-2}\l_{N-1}\widetilde{V_{-1}},
\]
and therefore
\begin{equation}\label{lm1}
\l_{-1} = \l_{N+4}\l_N\l_{N-1}\l_{N-2}.
\end{equation}
Now, condition (\ref{normalization}) results in the following equations
\[
\l_i\l_{i+1}\l_{i+2} = g_i
\]
where $g_i = \det(\V_i, \V_{i+1}, \V_{i+2})^{-1}$, for $i=0,1,\dots,N$. If we apply logarithms to both sides of these equation (adjusting for signs if needed), we get the system
\[
\Lambda_i+\Lambda_{i+1}+\Lambda_{i+2} = G_i
\]
where $\Lambda_i = \ln \l_i$; $G_i = \ln g_i$, $i=0,1,\dots, N$. Using (\ref{lambdas}) and (\ref{lm1}) we additionally have  
\[\Lambda_{N+k} = \Lambda_{k-2}+\Lambda_{k-1}+\Lambda_k+\Lambda_{k+1},
\] 
for $k=2,3,\dots$; and since $\l_{N+4} = \l_2\l_3\l_4\l_5$, we obtain
\begin{eqnarray*}\Lambda_{N+1} &=& \Lambda_{N+4}+\Lambda_{N}+\Lambda_{N-1}+\Lambda_{N-2}+\Lambda_0+\Lambda_1+\Lambda_2 \\&=& \Lambda_{N}+\Lambda_{N-1}+\Lambda_{N-2}+\Lambda_0+\Lambda_1+2\Lambda_2+\Lambda_3+\Lambda_4+\Lambda_5.  \end{eqnarray*} 

 The $(N+1)\times(N+1)$ coefficient matrix of this system is thus given by
\[
\left(\begin{array}{cccccccccccc} 1 & 1 & 1 & 0 & 0 & 0 & \dots & 0 & 0 & 0 & 0 & 0 \\ 0&1&1&1& 0& 0& \dots & 0 & 0& 0& 0& 0 \\ \vdots & \vdots & \ddots & \ddots & \ddots & \ddots & \ddots & \ddots & \ddots & \ddots &\vdots & \vdots \\ 0 & 0 & 0 & 0 & \dots & 0 & 0 & 0 & 0 & 1 & 1 & 1 \\ 1 & 1 & 2 & 1 & 1 & 1 & 0 & \dots & 0 & 1 & 2 & 2  \\ 2 & 2 & 3 & 2 & 1 & 1 & 0 & \dots & 0 & 1 & 1 & 2\end{array}\right)
\]
We need to calculate its determinant. If we use  rows one, four and $N-1$ to row reduce the last two rows, they become
\[
\left(\begin{array}{ccccccccc}0&0&1&0&0&\dots&0&1&1\\ 0&0&1&1&0&\dots&0&0&1\end{array}\right).
\]
This reduction allow us to remove the first two rows and columns of the matrix so the determinant of the coefficient matrix equal that of the $(N-1)\times(N-1)$ matrix
\[
\begin{pmatrix} 1&1&1&0&\dots&0&0&0\\ 0&1&1&1&\dots&0&0&0\\ \vdots&\vdots&\ddots&\ddots&\ddots&\ddots&\vdots\vdots\\ 0&\dots&0&0&0&1&1&1\\ 1&0&0&0&\dots&0&1&1\\ 1&1&0&0&0&\dots&0&1\end{pmatrix}.
\]
Using again the first two rows and row reduction we can change the last two rows to
\[
\left(\begin{array}{ccccccccc} 0&0&0&1&0&\dots&0&1&1\\ 0&0&-1&0&0&\dots&0&0&1\end{array}\right)
\]
which again allow us to remove the first two rows and columns and have the determinant  be equal to that of the $(N-3)\times(N-3)$ matrix 
\[
\begin{pmatrix} 1&1&1&0&\dots&0&0&0\\ 0&1&1&1&\dots&0&0&0\\ \vdots&\vdots&\ddots&\ddots&\ddots&\ddots&\vdots\vdots\\ 0&\dots&0&0&0&1&1&1\\ 0&1&0&0&\dots&0&1&1\\ -1&0&0&0&0&\dots&0&1\end{pmatrix}.
\]
Once this form of the matrix is achieved, the same process (using the first three rows to row reduce the last two) will produce an exact replica of the matrix with a smaller size. Each reduction process will reduce the size by $3$. Since we start with size $N-3$, reiterating the process we get to
\[
\begin{pmatrix} 1&1&1&0\\ 0&1&1&1\\ 0&1&1&1\\-1&0&0&1\end{pmatrix}, \quad \begin{pmatrix} 1&1&1&0&0\\ 0&1&1&1&0\\0&0&1&1&1\\ 0&1& 0&1&1\\ -1&0&0&0&1\end{pmatrix}, \quad\text{or}\quad
\begin{pmatrix} 1&1&1&0&0&0\\ 0&1&1&1&0&0\\ 0&0&1&1&1&0\\0&0&0&1&1&1\\0& 1&0&0&1&1\\ -1&0&0&0&0&1\end{pmatrix},
\]
in the cases $N=3s+1$, $N = 3s+2$ or $N = 3s$, respectively. The determinants of these matrices are $0$, $3$ and $3$, also respectively, therefore
the lift is unique if, and only if $N \ne 3s+1$, as stated.
\end{proof}

Given the lift $\{V_i\}$ we just found, let  $\{\ag_i, \b_i, \cg_i\}_{i=1}^\infty$ be defined by the relation
\begin{equation}\label{spiralrel}
V_{i+3} = \ag_i V_{i+2}+\b_iV_{i+1}+\cg_i V_i.
\end{equation}
for any $i$. (Notice that the definition of $c_i$ in the statement of the theorem \ref{lift} coincides with this one.) From now on we will assume that $N\ne 3s+1$ for any $s$.
\begin{theorem}\label{invariants} The moduli space of generic, strictly convex twisted spirals is a $2N+1$ manifold. A set of coordinates for a generic spiral is given by the set of discrete invariants $\G = \{\{\ag_i, \b_i\}_{i=0}^{N-1},  c_{N}\}$.
\end{theorem}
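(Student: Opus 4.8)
Since $\G$ has exactly $2N+1$ entries --- $\ag_0,\dots,\ag_{N-1}$, $\b_0,\dots,\b_{N-1}$ and $c_N$ --- the plan is to show that on a dense open subset of the moduli space the assignment $P\mapsto\G$ is a local diffeomorphism onto an open subset of $\R^{2N+1}$; this proves at once that the moduli space is a $2N+1$ manifold near a generic spiral and that $\G$ is a coordinate system on it. (The dimension was computed in \cite{Schspiral}; it is also consistent with the raw count of $2N$ parameters for $p_1,\dots,p_N$, one for $p_{N+1}$ on a segment and eight for $M\in\PSL(3,\R)$, modulo the eight-dimensional action of $\PSL(3,\R)$.) That the entries of $\G$ descend to the moduli space is immediate: the lift $\{V_i\}$ of Theorem \ref{lift} is unique, the action of $\PSL(3,\R)$ on the spiral lifts to the linear action of $\SL(3,\R)$ on $\{V_i\}$, and the coefficients $\ag_i,\b_i$ in (\ref{spiralrel}) as well as the ratio $c_N$ are unchanged under a unimodular linear change of the $V_i$. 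Using that freedom, I normalize $V_0=e_1$, $V_1=e_2$, $V_2=e_3$ once and for all.

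First, taking determinants against $V_{i+1},V_{i+2}$ in (\ref{spiralrel}) gives $\cg_i=\det(V_{i+1},V_{i+2},V_{i+3})/\det(V_i,V_{i+1},V_{i+2})$ (a cyclic permutation of three columns being harmless), which by (\ref{normalization}) equals $1$ for $0\le i\le N-1$, so $\cg_N=c_N$ is the first member of the $\cg$-family not automatically equal to $1$. Hence iterating $V_{i+3}=\ag_iV_{i+2}+\b_iV_{i+1}+V_i$ for $i=0,\dots,N-1$ from the normalized $V_0,V_1,V_2$ produces $V_3,\dots,V_{N+2}$ depending polynomially on $(\ag_0,\dots,\ag_{N-1},\b_0,\dots,\b_{N-1})$; moreover $\det(V_{i+1},V_{i+2},V_{i+3})=\det(V_i,V_{i+1},V_{i+2})$ along the recursion, so (\ref{normalization}) holds automatically, and conversely the $\ag_i,\b_i$ are recovered from the $V$'s by expanding $V_{i+3}$ in the generically independent basis $V_i,V_{i+1},V_{i+2}$. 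Thus the ``window'' $\{V_0,\dots,V_{N+2}\}$ is identified, over a dense open set and in the chosen frame, with $\R^{2N}$ via $(\ag_0,\dots,\ag_{N-1},\b_0,\dots,\b_{N-1})$, and it remains only to show that this window together with the single scalar $c_N$ recovers the monodromy --- hence the entire spiral --- and that generic data are realized this way.

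This is the heart of the matter. The defining relations $V_{N+i}=M\,T(V_{i-1})$ and $V_{-i}=M^{-1}\,\T(V_{N-i+1})$, for the particular lifts of $T$ and $\T$ fixed in Theorem \ref{lift}, propagate through (\ref{spiralrel}): because $M$ and $M^{-1}$ act linearly while the recursion coefficients are $\SL(3,\R)$-invariant, the invariants $\ag_j,\b_j,\cg_j$ at indices $j\ge N+1$ and $j<0$ are tied by the transformation laws of the pentagram map and of its inverse on the $(e,f)$-type invariants to the seed invariants together with $\cg_N=c_N$; $c_N$, the unique seam coefficient not forced to equal $1$, enters exactly here. One then shows that this coupled system --- the two ``seam'' relations together with (\ref{spiralrel}) --- has, for a generic window and generic $c_N$, a unique solution: the remaining $V_j$ ($j<0$ and $j>N+2$) are reconstructed from the window and $c_N$, and $M\in\SL(3,\R)$ is read off relations of the form $M\,V_{-k}=\T(V_{N+1-k})$ for a suitable range of $k$, the determinant identity making the resulting linear map unimodular being precisely the compatibility that the normalization of $\T$ in Theorem \ref{lift} is designed to produce (cf.\ the Comment following that theorem). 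Checking that all remaining relations and all of (\ref{normalization}) are then automatic shows the reconstructed data is a genuine twisted pentagram spiral with the prescribed $\G$. Combined with the first two steps, this gives mutually inverse smooth maps between normalized spirals and an open subset of $\R^{2N+1}$, proving the theorem.

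The main obstacle is exactly this last step: controlling how the monodromy relation threads through the recursion across the two seams near the indices $0$ and $N$, and verifying that the specific lifts of $T$ and $T^{-1}$ chosen in Theorem \ref{lift} make those relations close up over the window with only the single scalar $c_N$ left free. That is where the technical weight lies and is why the full argument is deferred to the Appendix; the first two steps are routine linear algebra and recursion bookkeeping.
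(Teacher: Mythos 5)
Your first two steps are sound and agree with how the paper's argument begins: the invariants descend to the moduli space because the lift of Theorem \ref{lift} is unique and equivariant, and after normalizing $\rho_0=(V_0,V_1,V_2)$ the window $V_0,\dots,V_{N+2}$ is freely parametrized by $\{a_i,b_i\}_{i=0}^{N-1}$ with $c_i=1$ for $i=0,\dots,N-1$. The problem is that everything after ``This is the heart of the matter'' is a description of what must be proved rather than a proof of it. You assert that the coupled system at the two seams ``has, for a generic window and generic $c_N$, a unique solution'' and that the chosen lifts of $T$ and $T^{-1}$ ``make those relations close up over the window with only the single scalar $c_N$ left free,'' but you give no argument for either claim, and this is precisely the nontrivial content of the theorem: a priori the spiral beyond the window involves infinitely many further coefficients $a_j,b_j,c_j$ together with the monodromy, and nothing you write explains why only one new scalar survives.

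Concretely, two ingredients are missing. First, a locality/gauge lemma: the paper proves $K_{N+i}=\A_i^{-1}K_{i-2}\A_{i+1}$ and $K_{-i}=\B_{-i}^{-1}K_{N-i-1}\B_{-i+1}$ with $\A_i$, $\B_{-i}$ built only from nearby invariants, which collapses the infinite tail of unknowns to the finite list $K_{-1},K_{-2},K_{-3},a_N,b_N$. Second, and more importantly, the relations that pin down this finite list come from the geometric constraints defining the seed, which you never invoke: the condition that $p_{N+1}$ lies on the segment joining $p_N$ and $M\cdot p_1$ gives $\det(V_{N+1},MV_1,V_N)=0$, hence $a_N=a_1$; the analogous condition $\det(MV_0,V_N,V_{N-1})=0$ gives $c_{-1}=a_{N-1}/a_0$; and these, combined with $\det\rho_{N+1}=c_N$ and the expression of $K_{N+1}$ as a gauge transform of $K_{-1}$, solve for $b_N$, $a_{-1}$, $b_{-1}$, $c_{N+1}$ rationally in terms of $\G$. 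Without identifying these constraints one cannot even close the count: $K_{-1}$ alone carries three unknowns and $a_N,b_N$ two more, so five independent relations must be exhibited, and your proposal does not say where any of them come from. The skeleton matches the paper's strategy, but the proof is not there.
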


The proof of this theorem is rather long and mainly calculational, with a few interesting algebraic relations. One of the key parts of the proof is to show that if 
\[
K_i = \begin{pmatrix} 0&0& c_i\\ 1&0&b_i\\ 0&1&a_i\end{pmatrix}
\]
then
\[
 K_{N+i} = \A_i^{-1} K_{i-2}\A_{i+1}, \quad\quad K_{-i} = \B_{-i}^{-1} K_{N-i-1}\B_{-i+1}
\]
where $\A_i$ and $\B_{-i}$ are local and depend only on invariants nearby $p_{i-2}$ or $p_{N-i-1}$, respectively. This allow us to narrow the proof to the study of invariants that are close to the ends of the seed. The matrix $\A_1$ will have a crucial role in the description of the monodromy, as we will see towards the end of the paper. We have included the complete proof in the appendix.

 The last step in this section is writing the shift map in these coordinates.
\subsection{The shift map in coordinates}
 Let $\TS(N,1)$  be the manifold of twisted spirals described by the coordinates above in open generic subsets. Define the shift map 
\[
\Smap: \TS(N,1) \to \TS(N,1)
\]
as the map assigning to a given spiral,   the spiral obtained by shifting its vertices   once forward. That is
\begin{equation}\label{shift}
\Smap(\{p_1, p_2,  \dots, p_N; M\cdot T(p_0)\}) = \{p_2, \dots, p_N, M\cdot T(p_0); M\cdot T(p_1)\}. 
\end{equation}
The first of the following two  lemmas describes the interrelation between the lift of the spiral as given in Theorem \ref{lift}, and that of the shifted one.

Let   $A_i = c_i+a_ib_{i-1}$ be defined as in the Appendix, and let $\al$ and $\be$ be determined by the equations 
\begin{equation}\label{albe1}
\al^2\be = c_N ; \quad \al^{-1}\be^{-2} = \frac{A_3 A_0^2}{c_{-1}c_N} = \frac{A_3A_0}{A_1}
\end{equation}
if $N = 3s+2$; and
\begin{equation}\label{albe2}
\al^{-1}\be = c_N ; \quad \al^{-2}\be^{-1} = \frac{A_3 A_0^2}{c_{-1}c_N} = \frac{A_3A_0}{A_1}
\end{equation}
if $N = 3s$.

\begin{lemma} \label{lemma1}Let $\{p_1,\dots,p_N; M\cdot T(p_0)\}$ be a twisted spiral, and let $\{V_i\}$ be the lift described in theorem \ref{lift}. Let $\{p_2, \dots, p_N, M\cdot T(p_0); M\cdot T(p_1)\}$ be its shift and let $\{\hV_i\}$ be its analogous lift (the index matches that of $p_i$). Then
\begin{enumerate}
\item  if $N = 3s+2$, 
\begin{equation}\label{al1}
\hV_0 = \al^{-1} \be^{-1}V_0; \quad \hV_{3r+1} = \al^{-1}\be^{-1}V_{3r+1}; \quad \hV_{3r+2} = \al V_{3r+2}; \quad \hV_{3r} = \be V_{3r}
\end{equation}
\noindent with subindices ranging from $0$ to $N+2$, and
 \begin{equation}\label{al12}
 \hV_{N+3} = \al^{-1}\be^{-1}V_{N+3},\quad \hV_{N+4} = \al V_{N+4};
 \end{equation}
\item if $N = 3s$,
\begin{equation}\label{al2}
\hV_0 = \al^{-1}\be^{-1}V_0; \quad \hV_{3r+1} = \be V_{3r+1}; \quad \hV_{3r+2} = \al^{-1}\be^{-1}V_{3r+2}; \quad \hV_{3r} = \al V_{3r}
\end{equation}
\noindent with subindices ranging from $0$ to $N+2$, and 
\begin{equation}\label{al22}
\hV_{N+3} = \be V_{N+3},\quad \hV_{N+4} = \al^{-1}\be^{-1}V_{N+4}. 
\end{equation}
\end{enumerate}
\end{lemma}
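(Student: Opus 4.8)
The plan is to compare the two normalization systems directly. Both $\{V_i\}$ and $\{\hV_i\}$ are lifts of the \emph{same} underlying spiral points (merely reindexed), so there must exist scalars $\mu_i$ with $\hV_i = \mu_i V_i$; the content of the lemma is to pin down the $\mu_i$. First I would observe that the relabelling $p_i \mapsto p_{i+1}$ sends the old seed $\{p_1,\dots,p_N;M T(p_0)\}$ to $\{p_2,\dots,p_N,M T(p_0);MT(p_1)\}$, so the new lift $\hV$ of the $j$-th shifted vertex is a rescaling of the old $V_{j+1}$; writing $\hV_i=\mu_i V_i$ (so that $\hV$ indexed by $i$ corresponds to the old $V_i$, per the index convention stated in the lemma), the determinant normalization $\det(\hV_i,\hV_{i+1},\hV_{i+2})=1$ that $\{\hV_i\}$ must satisfy for $i=0,\dots,N$ becomes $\mu_i\mu_{i+1}\mu_{i+2}\det(V_i,V_{i+1},V_{i+2})=1$. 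Because $\{V_i\}$ already satisfies $\det(V_i,V_{i+1},V_{i+2})=1$ on the \emph{old} range, but the old and new ranges of validity of the normalization differ (the shifted seed's normalization runs over a window translated by one, and the two ``end'' conditions at $p_0$ and at $p_{N+1}$ move), we get $\mu_i\mu_{i+1}\mu_{i+2}=1$ for all the interior indices, which forces the $\mu_i$ to be $3$-periodic: $\mu_{3r+1}=\al^{-1}\be^{-1}$, $\mu_{3r+2}=\al$, $\mu_{3r}=\be$ for some constants $\al,\be$ (in the $N=3s+2$ case; the exponents permute in the $N=3s$ case because $N\bmod 3$ shifts which residue class hits the boundary). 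That $\mu_0$ equals $\al^{-1}\be^{-1}$ rather than an independent constant follows from tracking $V_0=M^{-1}\T(V_N)$ through its defining formula \eqref{TinvV}, exactly as in the computation of $\l_{-1}$ in the proof of Theorem \ref{lift}: $\T$ is quartic in the $V$'s, so $\hV_0/V_0$ is a product of four of the $\mu$'s near index $N$, and $3$-periodicity collapses this to the claimed monomial.

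Next I would determine $\al$ and $\be$ themselves and verify \eqref{al12}. The constants are not free: they are fixed by the two genuinely different normalization equations, the ones involving the relocated seed point. One boundary condition gives $\mu_N\mu_{N+1}\mu_{N+2}$ a forced value coming from $\det(\hV_N,\hV_{N+1},\hV_{N+2})$ versus the old $c_N=\det(V_{N+1},V_{N+2},V_{N+3})/\det(V_N,V_{N+1},V_{N+2})$ — this is where the relation $\al^2\be=c_N$ (resp. $\al^{-1}\be=c_N$) comes from. The other comes from the seed point at the \emph{front} of the polygon: shifting moves $p_0$, and the lift of the new first vertex must satisfy the $T^{-1}$-compatibility \eqref{TinvV} with the \emph{old} data, which after expanding the cross products and using the quantities $A_i=c_i+a_ib_{i-1}$ produces the second relation in \eqref{albe1} (resp. \eqref{albe2}). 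Finally, \eqref{al12} and \eqref{al22} are obtained by applying $\hV_{N+i}=MT(\hV_{i-1})$ with $T$ quartic: $\hV_{N+3}=MT(\hV_2)$ picks up $\mu_1\mu_2\mu_3\mu_4$, which by $3$-periodicity of $\mu$ equals $\mu_{3r+1}$, i.e. $\al^{-1}\be^{-1}$; similarly $\hV_{N+4}=MT(\hV_3)$ gives $\mu_2\mu_3\mu_4\mu_5=\al$, matching the stated formulas (and the analogous bookkeeping for $N=3s$).

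The main obstacle I expect is the careful index bookkeeping at the two ends: one must correctly identify, in each congruence class of $N$ mod $3$, \emph{which} three-term windows $\det(\hV_i,\hV_{i+1},\hV_{i+2})=1$ are inherited unchanged from $\{V_i\}$ and which two are new, and then confirm that the resulting overdetermined-looking linear system for $\ln\mu_i$ is exactly consistent — this is the same phenomenon that made the coefficient-matrix determinant equal $3$ (not $0$) in the proof of Theorem \ref{lift}, and the $N=3s+1$ exclusion reappears here as the reason the system is solvable. The interior $3$-periodicity is then immediate, and the only real computation is the two end evaluations of $\T$ and $T$, which are the quartic cross-product expansions already rehearsed in the proof of Theorem \ref{lift}; I would defer the bulk of that algebra to the Appendix as the authors indicate.
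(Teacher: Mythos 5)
Your overall strategy is the same as the paper's: write $\hV_i=\mu_i V_i$, use the normalization equations that the two lifts share to force $\mu_i\mu_{i+1}\mu_{i+2}=1$ and hence $3$-periodicity with two free constants, pin down $\al$ and $\be$ from the two genuinely new end conditions, and get $\hV_{N+3},\hV_{N+4}$ from the quartic structure of $T$. The paper implements this by expressing both lifts over a common arbitrary lift, $V_i=\l_i\V_i$, $\hV_i=\hl_i\V_i$, solving the shared equations for everything in terms of $\l_N,\l_{N+1}$, and setting $\hl_N=\al\l_N$, $\hl_{N+1}=\be\l_{N+1}$; that is cosmetically different from your $\mu_i$ but the same argument. (For the record, the shared window is $i=1,\dots,N$: the unshifted lift is normalized for $i=0,\dots,N$ and the shifted one for $i=1,\dots,N+1$, not $i=0,\dots,N$ as you first write, though you correct this implicitly.)

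There is one concrete misstep, in your derivation of $\mu_0=\al^{-1}\be^{-1}$. The relation you invoke, $V_0=M^{-1}\T(V_N)$, does not hold (that identity defines $V_{-1}$); the relevant relation is $\hV_0=M^{-1}\T(\hV_{N+1})$ for the \emph{shifted} lift, whereas for the unshifted lift $V_0$ is fixed by the extra normalization $\det(V_0,V_1,V_2)=1$ and, as the paper emphasizes, $V_0\ne M^{-1}\T(V_{N+1})$ --- the discrepancy factor is precisely $\al\be^2$ (resp.\ $\al^{-2}\be^{-1}$). Consequently, pushing $\hV_0$ through $\T$ does \emph{not} collapse to a monomial in the nearby $\mu$'s: it produces that a priori unknown discrepancy factor, and this computation is exactly what yields the second equation in (\ref{albe1})--(\ref{albe2}). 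As written, you use the front-end condition twice (once to get $\mu_0$, once to get the second relation for $\al,\be$), which is circular. The fix, and what the paper does, is to obtain $\mu_0=\al^{-1}\be^{-1}$ from data both lifts share: $\det(V_N,V_{N+1},V_{N+2})=1$ combined with $\l_{N+2}=\l_0\l_1\l_2\l_3$ (from $V_{N+2}=MT(V_1)$) expresses $\l_0$ as $\l_N^{-1}\l_{N+1}^{-1}$ times lift-independent quantities, as in (\ref{lNrel}); only then is the $\T$-compatibility at index $0$ free to serve as the second determining equation for $\al,\be$. With that correction your plan matches the paper's proof; what remains is the deferred cross-product algebra (the expansion of $M^{-1}\T(MT(V_0))$ in terms of $A_0,A_1,A_3,c_{-1},c_N$), which is indeed the bulk of the appendix computation.
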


Once more the proof of this lemma is rather long, based on mainly linear algebra and a careful analysis of the different cases. We have included it in the Appendix. Only to remark that an interesting result appearing in the proof is the algebraic {\it description of $\al\be^2$ as measuring the failure of $\T$ to be the twisted inverse of $T$ at $V_0$}. That is
\[\begin{array}{cccc}
V_0 &=& \al\be^2 M^{-1}\T(MT(V_0))~~ &\hbox{if} ~ N=3s+2;\\\\ V_0 &=& \al^{-2}\be^{-1} M^{-1}\T(MT(V_0))~~&\hbox{if}~N=3s.
\end{array}\]
The factors $\al$ and $\be$ can be written in terms of the generators by finding $c_{-1}$ and $A_0$ as functions of $\{a_i,b_i\}_{i=0}^{N-1}$, $c_N$. Indeed,
 using (\ref{cm1}), (\ref{rel}) and (\ref{rel2}) in the appendix. If $B_i = c_i+b_i a_{i-2}$, so that $B_N = c_N + b_N a_{N-2}$, we get
\[
\frac{c_{-1}}{A_0} = \frac{c_N}{B_NA_2}; \quad B_N = \frac{c_N^2}{A_1 A_2}
\]
and so
\[
\frac{c_{-1}}{A_0}=\frac{A_1}{c_N},\hskip2ex \text{and}\hskip2ex\frac{A_3A_0}{A_1} = \frac{A_3A_0^2}{c_{-1}c_N} = \frac{A_3 a_{N-1}}{A_1^2 a_0}c_N\hskip2ex \text{so that}\hskip2ex A_0=\frac{a_{N-1}c_N}{A_1a_0}.
\]
Our next lemma relates the invariants for the different spirals.
\begin{lemma}\label{one}
Assume a twisted pentagram spiral has a lift $\{V_i\}$ and the shifted spiral has a lift $\{\hV_i\}$ as in theorem \ref{lift}. Let $\{\ag_i, \b_i\}_{i=0}^{N-1}\cup \{c_N\}$ be the invariants defined by the lift in theorem \ref{lift} while $\{\ha_i, \hb_i\}_{i=1}^{N-1} \cup \{\hc_N\}$ are the ones defined by the shifted lift. Then

\begin{enumerate} \item[a.]  If $N = 3s+2$, we have
 \begin{equation}\label{ab1}\ha_k = \begin{cases} \alpha^{-1}\be a_k & k = 3r\\ \alpha^{-1}\be^{-2}a_k& k = 3r+1\\ \alpha^2 \beta a_k &k = 3r+2\end{cases}, \quad \hb_k = \begin{cases}  \alpha \be^2b_k& k = 3r\\ \alpha^{-2}\be^{-1} b_k & k= 3r+1\\ \al\be^{-1} b_k & k = 3r+2\end{cases}
\end{equation}
for $k =0, 1, 2, \dots, N-1$, and $\ha_N = a_N$, $\hb_N = \al^{-1} \be^{-2}b_N$, $\hc_{N+1} = \al \be^{-1} c_{N+1}$. 

\item[b.]   If  $N = 3s$, then
 \begin{equation}\label{ab2}\ha_k = \begin{cases} \alpha^2\be a_k & k = 3r\\ \alpha^{-1}\be a_k& k = 3r+1\\ \alpha^{-1}\be^{-2} a_k &k = 3r+2\end{cases}, \quad \hb_k = \begin{cases} \alpha\be^{-1} b_k& k = 3r\\ \alpha\be^2 b_k & k= 3r+1\\ \al^{-2}\be^{-1} b_k & k = 3r+2\end{cases}
\end{equation}
for $k = 0, 2, \dots, N-1$, and $\ha_N = \al\be^2 a_N$, $\hb_N = b_N$, $\hc_{N+1} = \al^{-1}\be^{-2} c_{N+1}$. 
\end{enumerate}
In both cases $\al$ and $\be$ are as in (\ref{albe1})-(\ref{albe2}).
\end{lemma}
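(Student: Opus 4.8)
\textbf{Proof proposal for Lemma \ref{one}.}

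The plan is to derive the transformation law for the invariants directly from the transformation law for the lifts already established in Lemma \ref{lemma1}, by substituting into the defining relation \eqref{spiralrel}. Recall that $\{\ag_i,\b_i,\cg_i\}$ are determined by $V_{i+3} = \ag_i V_{i+2}+\b_iV_{i+1}+\cg_i V_i$, and similarly for the hatted quantities with $\hV$ in place of $V$. The strategy is: for each congruence class of $i$ modulo $3$, substitute the known scalings $\hV_j = \mu_j V_j$ (where $\mu_j \in \{\al,\be,\al^{-1}\be^{-1}\}$ depending on $j \bmod 3$, from \eqref{al1} or \eqref{al2}) into the hatted relation $\hV_{i+3} = \ha_i \hV_{i+2} + \hb_i \hV_{i+1} + \hc_i \hV_i$, divide through by the appropriate scalar, and compare with the unhatted relation to read off $\ha_i = (\mu_{i+2}/\mu_{i+3})a_i$, $\hb_i = (\mu_{i+1}/\mu_{i+3})b_i$, and $\hc_i = (\mu_i/\mu_{i+3})c_i$. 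This is a purely bookkeeping computation: one simply tabulates the ratios $\mu_{i+2}/\mu_{i+3}$, $\mu_{i+1}/\mu_{i+3}$, $\mu_i/\mu_{i+3}$ for $i \equiv 0, 1, 2 \pmod 3$ using the periodic pattern of $\mu_j$, and checks that these match the right-hand sides of \eqref{ab1} and \eqref{ab2}. For instance, in the case $N=3s+2$ with $i = 3r$, one has $\mu_{i+2} = \al$, $\mu_{i+3} = \be$, so $\ha_{3r} = \al\be^{-1} a_{3r}$ — but here I must be careful about index shift in the hatted lift, since $\hV_j$ carries the index of $p_j$ in the \emph{shifted} spiral, which is the old $p_{j+1}$; accordingly the relation indexed by $i$ for the hatted lift compares $\hV_{i+3},\hV_{i+2},\hV_{i+1},\hV_i$, and the scalars are those of \eqref{al1}. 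The bulk of part (a) and part (b) follows by running through the three residue classes and confirming consistency.

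The genuinely delicate part — and the main obstacle — is the treatment of the invariants near the ends of the seed, namely $\ha_N, \hb_N, \hc_{N+1}$. Here the simple substitution fails because the relevant lifts $\hV_{N+1}, \dots, \hV_{N+4}$ do not all scale by the generic periodic pattern: Lemma \ref{lemma1} gives the special formulas \eqref{al12} (resp. \eqref{al22}) for $\hV_{N+3}$ and $\hV_{N+4}$, which break the period-three recurrence precisely because the shift moves the spiraling seed point. I would handle these by writing out the relations $\hV_{N+3} = \ha_N \hV_{N+2} + \hb_N \hV_{N+1} + \hc_N \hV_N$ and one more step $\hV_{N+4} = \ha_{N+1}\hV_{N+3} + \cdots$ explicitly, and feeding in the special end-scalings together with the constraints defining $\al,\be$ in \eqref{albe1}–\eqref{albe2}. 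The key identity $\al^2\be = c_N$ (resp. $\al^{-1}\be = c_N$) is exactly what is needed to close the computation: it encodes the relation between the monodromy action at the end and the scaling, and it is what forces, e.g., $\ha_N = a_N$ in case (a) while $\hc_{N+1}$ picks up a nontrivial factor. I expect to need the auxiliary relations \eqref{cm1}, \eqref{rel}, \eqref{rel2} from the appendix (already invoked in the paragraph following Lemma \ref{lemma1} to express $c_{-1}/A_0$ and $A_0$ in terms of the generators), together with the algebraic description of $\al\be^2$ (resp. $\al^{-2}\be^{-1}$) as the failure of $\T$ to be the twisted inverse of $T$ at $V_0$, which ties $V_0$'s scaling to the end data.

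Finally I would verify internal consistency: the scalings $\mu_j$ satisfy $\mu_i\mu_{i+1}\mu_{i+2} = 1$ on each generic block (since both lifts satisfy the normalization $\det(V_i,V_{i+1},V_{i+2}) = 1$ of \eqref{normalization}, the product of three consecutive scalings must be $1$), which serves as a useful check at every stage and in fact pins down which of $\al,\be,\al^{-1}\be^{-1}$ appears in each slot. One should also double-check that the exponents of $\al$ and $\be$ appearing in \eqref{ab1}–\eqref{ab2} are consistent with $\ha_i\hb_{i-1}$-type combinations (the quantities $A_i, B_i$ used elsewhere), since those combinations reappear in the definitions \eqref{albe1}–\eqref{albe2} of $\al,\be$ and a sign or exponent error would surface there. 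Given Lemma \ref{lemma1}, no new geometric input is required; the proof is a careful, case-by-case algebraic verification, with the end-seed invariants being the only place demanding real care.
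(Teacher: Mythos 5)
Your overall strategy is exactly the paper's: the published proof is a one-line appeal to ``the definitions and the results of the previous lemma,'' i.e.\ substitute the scalings of Lemma \ref{lemma1} into the defining relation (equivalently, into the determinant ratios for $a_i,b_i,c_i$) and tabulate the three residue classes. However, your one concretely stated formula is inverted. Writing $\hV_j=\mu_j V_j$ and substituting into $\hV_{i+3}=\ha_i\hV_{i+2}+\hb_i\hV_{i+1}+\hc_i\hV_i$ gives $\mu_{i+3}V_{i+3}=\ha_i\mu_{i+2}V_{i+2}+\cdots$, hence
\[
\ha_i=\frac{\mu_{i+3}}{\mu_{i+2}}\,a_i,\qquad \hb_i=\frac{\mu_{i+3}}{\mu_{i+1}}\,b_i,\qquad \hc_i=\frac{\mu_{i+3}}{\mu_{i}}\,c_i,
\]
not the reciprocals you wrote. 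Your worked instance exposes the problem: for $N=3s+2$, $i=3r$ you obtain $\ha_{3r}=\al\be^{-1}a_{3r}$, whereas the lemma (and the correct ratio, with $\mu_{3r+3}=\be$, $\mu_{3r+2}=\al$) gives $\ha_{3r}=\al^{-1}\be\,a_{3r}$. Run as written, your recipe produces the reciprocal of every entry in (\ref{ab1})--(\ref{ab2}). There is no index shift to worry about: Lemma \ref{lemma1} states explicitly that the hatted lift is indexed by the same $p_i$, so $\hV_j$ and $V_j$ lift the same point.

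A second, smaller point: the end invariants $\ha_N,\hb_N,\hc_{N+1}$ do not require the extra machinery you propose (the constraints (\ref{albe1})--(\ref{albe2}), the relations (\ref{cm1}), (\ref{rel}), (\ref{rel2}), or the $\al\be^2$ identity). The same substitution works verbatim once you use the special values $\hV_{N+3},\hV_{N+4}$ from (\ref{al12}) (resp.\ (\ref{al22})) in place of the generic periodic pattern; e.g.\ for $N=3s+2$ one gets $\ha_N=\mu_{N+3}/\mu_{N+2}=(\al^{-1}\be^{-1})/(\al^{-1}\be^{-1})=1$, $\hb_N=\al^{-1}\be^{-2}$, $\hc_{N+1}=\mu_{N+4}/\mu_{N+1}=\al\be^{-1}$, exactly as claimed. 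The identities defining $\al,\be$ are inputs to Lemma \ref{lemma1}, not to this one. So: right method, but correct the direction of the ratios and simplify the endpoint discussion.
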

\begin{comment} {\rm Notice that the relevant proportions that appear above
\begin{equation}\label{factors}
\al^{-1}\be = c_N, \quad\al^2\be = \frac{A_1}{A_0A_3}= \frac{A_1^2a_0}{A_3a_{N-1}c_N}, \quad \al\be^2 = \frac{c_N A_1}{A_3 A_0}= \frac{A_1^2a_0}{A_3a_{N-1}},
\end{equation}
are all rational expressions of the generating invariants.}
\end{comment} 
\begin{proof}
The proof of this lemma is a straightforward careful account of the different cases using the definitions
\[
a_i = \frac{\det(V_i, V_{i+1}, V_{i+3})}{\det(V_i, V_{i+1}, V_{i+2})},\quad b_i = \frac{\det(V_i, V_{i+3}, V_{i+2})}{\det(V_i, V_{i+1}, V_{i+2})}, \quad c_i = \frac{\det(V_{i+1}, V_{i+2}, V_{i+3})}{\det(V_i, V_{i+1}, V_{i+2})}\]
and the results of the previous lemma.
\end{proof}

The main theorem of this section (and of the paper) is now a consequence of the formulas found in (\ref{ab1})-(\ref{ab2}) above, and of the definition of $\al$ and $\be$.

\begin{theorem}\label{scalingth}
Consider the action of the one parameter group 
\begin{equation}\label{scaling}
a_k \to \mu a_k; \quad b_k \to \mu^{-1} b_k; \quad c_N \to c_N
\end{equation}
$k=0,\dots, N-1$, defined on the coordinates of a twisted pentagram spiral. Then the shift map is invariant under this action.
\end{theorem}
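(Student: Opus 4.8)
The plan is to show that the scaling action \eqref{scaling} commutes with the shift map $\Smap$ by tracking how both operations act on the coordinates $\{a_k, b_k\}_{k=0}^{N-1}\cup\{c_N\}$ and checking they produce the same result. The essential point, which makes the theorem almost immediate, is that Lemma \ref{one} already expresses the shifted invariants $\ha_k,\hb_k,\hc_{N+1}$ as the shifted original invariants multiplied by powers of $\al$ and $\be$, where $\al$ and $\be$ are themselves determined by \eqref{albe1}--\eqref{albe2} in terms of $c_N$ and the combinations $A_0, A_1, A_3$. So the proof reduces to verifying that $\al$ and $\be$ are \emph{invariant} under the scaling \eqref{scaling}: if that holds, then applying the scaling and then shifting multiplies each coordinate by the same power of $\al,\be$ as shifting and then applying the scaling, because the scaling factor $\mu$ (or $\mu^{-1}$) attached to each $a_k$ (resp. $b_k$) is untouched by the reindexing in \eqref{ab1}--\eqref{ab2} and commutes with the fixed scalars $\al,\be$.

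First I would record how the auxiliary quantities transform under \eqref{scaling}. Since $A_i = c_i + a_i b_{i-1}$ and, more generally, $c_k = a_k b_k - \text{(something)}$ — more precisely, using the relations among $a,b,c$ recalled in the proof of Lemma \ref{one} and the definitions $A_i = c_i + a_i b_{i-1}$, $B_i = c_i + b_i a_{i-2}$ — each $A_i$ is a sum of a term $c_i$ (weight $0$ under the scaling, since $c_N\to c_N$ and all the $c$'s are determined from the $a$'s, $b$'s, $c_N$ in a way that turns out to be scaling-invariant) and a term $a_i b_{i-1}$ of weight $\mu\cdot\mu^{-1}=1$. Hence I expect every $A_i$, and likewise $c_{-1}$, to be invariant under \eqref{scaling}. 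Granting this, the defining equations \eqref{albe1}: $\al^2\be = c_N$ and $\al^{-1}\be^{-2} = A_3A_0/A_1$ (resp. \eqref{albe2} in the case $N=3s$) have scaling-invariant right-hand sides, so the unique solution pair $(\al,\be)$ is scaling-invariant as well.

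Next I would combine this with Lemma \ref{one}. Write $R_\mu$ for the scaling action and take a spiral with coordinates $(\{a_k,b_k\},c_N)$. Computing $\Smap\circ R_\mu$: first $R_\mu$ sends $(a_k,b_k,c_N)$ to $(\mu a_k, \mu^{-1}b_k, c_N)$; applying $\Smap$ then uses Lemma \ref{one}, and since $\al,\be$ are computed from the $R_\mu$-image but equal those of the original spiral by the invariance just established, the resulting $k$-th coordinate is (for instance, in the case $N=3s+2$, $k=3r$) $\al^{-1}\be\,(\mu a_k)$, etc. Computing $R_\mu\circ\Smap$: first $\Smap$ gives $\ha_k = \al^{-1}\be\,a_k$ (same case), then $R_\mu$ multiplies by $\mu$ — but here one must be careful that $R_\mu$ acts on the \emph{shifted} coordinates according to their index, i.e.\ it multiplies $\ha_k$ by $\mu$ regardless of the residue of $k$ mod $3$, exactly as it multiplied $a_k$ by $\mu$. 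The two results agree coordinate-by-coordinate, including the boundary cases $\ha_N, \hb_N, \hc_{N+1}$, because those also only pick up the same powers of $\al,\be$ and the same single factor of $\mu^{\pm1}$. The main (and really the only) obstacle is the bookkeeping in the previous paragraph: confirming that each $A_i$ and $c_{-1}$ — equivalently, every $c_k$ reconstructed from the generators — is genuinely weight-$0$ under \eqref{scaling}, which requires unwinding the recursion \eqref{spiralrel} together with relations \eqref{cm1}, \eqref{rel}, \eqref{rel2}; once that homogeneity is in hand the theorem follows by the direct comparison above.
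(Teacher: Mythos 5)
Your proposal is correct and follows essentially the same route as the paper: the paper likewise reduces the theorem to the scaling-invariance of $\al$ and $\be$ (via the weight-zero combinations $c_N$ and $A_3a_{N-1}/A_1^2a_0$) and then checks the boundary quantities $a_N=a_1$, $b_N$ from (\ref{rel2}), and $c_{N+1}=c_N/(c_N+b_Na_{N-2})$ carry weights $\mu$, $\mu^{-1}$, $1$ respectively. The only difference is that the paper makes this last boundary verification explicit, whereas you defer it to bookkeeping; otherwise the arguments coincide.
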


\begin{proof} First of all, the shift map in local coordinates is given by  
\[
\Smap(a_0, a_1, \dots, a_{N-1}, b_0, b_1, \dots, b_{N-1}, c_N) = (\ha_1, \ha_2,\dots, \ha_{N-1}, \ha_N, \hb_1, \hb_2, \dots,\hb_N, \hc_{N+1}).
\]
Using the results of our previous lemma, we first need to show that $\al$ and $\be$ are invariant under the shift, since $\ha_k$, $\hb_k$, $k=0,\dots N$, and $\hc_{N+1}$ are, in all cases, proportional to $a_k$, $b_k$ and $c_{N+1}$, respectively, with proportionality constants  given by different powers of $\al$ and $\be$. Since $\al$ and $\be$ are uniquely defined by equation (\ref{factors}), it suffices to show that 
\[
c_N \quad \hbox{\rm and}\quad\frac{A_3a_{N-1}}{A_1^2a_0}
\]
are invariant under the action, which they clearly are.

Finally, we need to study the formulas for $a_N$, $b_N$ and $c_{N+1}$ and show that the extended induced action on these invariants is given by 
\[
a_N \to \mu a_N, \quad b_N \to \mu^{-1} b_N, \quad c_{N+1} \to c_{N+1}.
\]
 Since $\ha_N$, $\hb_N$ and $\hc_{N+1}$ are multiples of $a_N$, $b_N$ and $c_{N+1}$ with factors given by different powers of $\al$ and $\be$, that would conclude the proof.

And indeed, we know that $a_N = a_1$, $b_N$ is determined by (\ref{rel2}) (in the appendix), which is invariant under the extended action. We also know that and  $c_{N+1} =\displaystyle \frac{c_N}{c_N+b_N a_{N-2}}$. The theorem is proved.
\end{proof}
\section{A non-standard Lax representation of the shift map}

We can finally build a Lax representation of the shift map using $\{\rho_i\}$. Indeed, consider the shift map  $\Smap: \TS(N,1)\to \TS(N,1)$ and let $\Smap$ be (abusing once more the notation) the natural lift of $\Smap$ to the polygons (or spirals, although they are not spirals in the traditional sense) with seed $\{V_i\}_{i=1}^{N+1}$ in $\R^3$. We can define $\Smap$ as either a map on the seed
\[
\Smap(\{V_i\}_{i=1}^{N+1}) = \{\hV_{i+1}\}_{i=1}^{N+1}
\]
or as a map on the vertices, $\Smap(V_i) = \hV_{i+1}$, for any $i$. Note that the map can be extended to be defined in all vertices using the pullback, since all vertices can we written in terms of the seed.  If we define it as a map on the vertices, then we can further extend the map to $\rho_i$ by defining
\[
\Smap(\rho_i) = (\hV_{i+1}, \hV_{i+2}, \hV_{i+3}) = \widehat\rho_{i+1},
\]
whenever $\rho_i = (V_i, V_{i+1}, V_{i+2})$. We can also extend it to algebraic combinations of $\rho_i$ as usual using the pull back, so that, for example, $\Smap(\rho_i^{-1}\rho_{i+1}) = \Smap(\rho_i)^{-1}\Smap(\rho_{i+1})$.

We define 
\[
K_i = \rho_i^{-1}\rho_{i+1}\quad\hbox{\rm and}\quad N_i = \rho_i^{-1} \Smap(\rho_i).
\]
Compatibility conditions imply
\begin{equation}\label{MCeq}
\Smap(K_i) = N_i^{-1} K_i N_{i+1}
\end{equation}
We have
\[
K_i = \begin{pmatrix}0&0&c_i\\ 1&0&b_i\\0&1&a_i\end{pmatrix} \quad\hbox{and}\quad\Smap(K_i) = \begin{pmatrix} 0&0&\Smap(c_i)\\ 1&0& \Smap(b_i)\\ 0&1& \Smap(a_i)\end{pmatrix},
\]
and therefore (\ref{MCeq}) above describes the map $\Smap$ in coordinates $\{a_i, b_i, c_i\}$.
We can also easily find $N_i$ explicitly. Let $r_i$ and $s_i$ be the powers of $\al$ and $\be$ such that
\[
S(V_i) = \hV_{i+1} = \al^{r_i}\be^{s_i} V_{i+1}
\]
as in lemma \ref{lemma1} (the values of $r_i$ and $s_i$, will depend on $i$ and are given in that lemma). Then
\begin{eqnarray*}
N_i &=& \rho^{-1}_i \Smap(\rho_i) = \rho^{-1}_i(\al^{r_i}\be^{s_i}V_{i+1},\al^{r_{i+1}}\be^{s_{i+1}}V_{i+2} ,\al^{r_{i+2}}\be^{s_{i+2}}V_{i+3} )\\&=& \rho_i^{-1}\rho_{i+1}R_i = K_i   R_i
\end{eqnarray*}
where
\[
R_i = \begin{pmatrix}  \al^{r_i}\be^{s_i}&0&0\\0&\al^{r_{i+1}}\be^{s_{i+1}}&0\\0&0&\al^{r_{i+2}}\be^{s_{i+2}}\end{pmatrix}
\]
and $\Smap(K_i) = R_i^{-1} K_{i+1} R_{i+1}$ is thus expressed as a shifted gauge by diagonal matrices (of course, the entries of $R_i$  depend on  the invariants, the map is highly non-linear). It is clear that $R_i$ is invariant under the scaling since $\al$ and $\be$ are. 

Consider now the modified $K_i(\mu), N_i(\mu)$ obtained through the introduction of the $\mu$-scaling
\[
K_i(\mu) = \begin{pmatrix} 0&0& c_i\\ 1&0&\mu^{-1} b_i\\ 0&1&\mu a_i\end{pmatrix}, \quad N_i(\mu) = K_i(\mu) R_i.
\]
Since $R_i$ is invariant under the $\mu$-scaling, if we substitute $K_i$ by $K_i(\mu)$ in (\ref{MCeq}), the resulting equation will not depend on $\mu$. Therefore, the system
\[
\rho_{i+1} = \rho_i K_i(\mu); \quad \Smap(\rho_i) = \rho_i N_i(\mu)
\]
$i = 0,1,2,\dots,N$,  is a discrete AKNS representation for the map $\Smap$. Here $a_N$ and $b_N$ are treated as functions of the generating system, their explicit expressions are found in (\ref{aN}) and (\ref{rel2}) in the appendix
\[
a_N = a_1, \quad b_N =  \frac{c_N}{a_{N-2}}\left(\frac{c_N}{A_1A_2}-1\right).
\]
In fact, this is not a classical Lax representation: the problem is neither periodic nor infinite, but rather it has different boundary conditions given by the spiral condition. Nevertheless, the  monodromy is clearly preserved from the construction of the map since the shifted spiral  has the same monodromy (this can also be double checked by straightforward calculations).  Once we fix coordinates $\G$ for twisted spirals, only the conjugation class of the monodromy is well defined. Indeed, on the one hand, from (\ref{rhoi}) in the appendix we know that
\[
\rho_{N+1} = M \rho_{-1}\A_1 = M \rho_0 K_{-1}^{-1}\A_1,
\]
where $\A_1$ is as in (\ref{ai1}), and also from the appendix
\[
a_{-1} = \frac{a_{N-1}^2a_{N-2}A_1A_2}{a_0^2c_NA_0}, \quad\quad b_{-1}= \frac1{a_0}\left(\frac{a_{N-1}c_N}{A_1a_0}-1\right) \quad\quad c_{-1} = \frac{a_{N-1}}{a_0}.
\]
On the other hand 
\[
\rho_{N+1} = \rho_0 K_0K_1\dots K_N.
\]
Therefore
\[
\rho_0^{-1} M \rho_0 = K_0K_1\dots K_N \A_1^{-1}K_{-1}.
\]
Recall that for the pentagram map the relation was similar, except for the matrix $\A_1^{-1}K_{-1}$, which appears here due to the different boundary conditions. Thus, assuming that $\mu$ and $r$ are complex numbers, the following theorem has already been proved
\begin{theorem} The eigenvalues of the matrix 
\[
M(\mu) = K_0(\mu)K_1(\mu)\dots K_N(\mu) \A_1^{-1}(\mu)K_{-1}(\mu)
\]
are preserved by the shift map, and the map preserves the Riemann surface
\begin{equation}\label{spec}
\det(M(\mu) - r I) = 0.
\end{equation}
\end{theorem}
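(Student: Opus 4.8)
The plan is to read off the result directly from the compatibility structure already assembled in this section, so the proof is short. First I would recall the two expressions for $\rho_{N+1}$ that have just been derived: on the one hand $\rho_{N+1}=M\rho_0 K_{-1}^{-1}\A_1$ (using $\rho_{N+1}=M\rho_{-1}\A_1$ together with $\rho_{-1}=\rho_0 K_{-1}^{-1}$), and on the other hand $\rho_{N+1}=\rho_0 K_0K_1\cdots K_N$ from iterating $\rho_{i+1}=\rho_i K_i$. Equating these gives the conjugation formula $\rho_0^{-1}M\rho_0 = K_0K_1\cdots K_N\A_1^{-1}K_{-1}$, so the matrix $M(\mu):=K_0(\mu)K_1(\mu)\cdots K_N(\mu)\A_1^{-1}(\mu)K_{-1}(\mu)$ is, for every value of the parameter $\mu$, conjugate (by the frame $\rho_0$) to a lift of the monodromy $M$.

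The second ingredient is that the shift map does not change the monodromy: by construction the shifted spiral is built with the same projective transformation $M$, as noted in the paragraph preceding the theorem (and as can be checked directly). Combining this with the parameter-dependent compatibility established just above — namely that substituting $K_i(\mu)$ and $N_i(\mu)=K_i(\mu)R_i$ into $\Smap(K_i)=N_i^{-1}K_iN_{i+1}$ yields a $\mu$-independent identity because $R_i$ is invariant under the $\mu$-scaling — I would argue that $M(\mu)$ and $\Smap$-image $\widehat M(\mu)$, computed from the shifted invariants $\ha_i,\hb_i,\hc_i$, are conjugate: telescoping $\Smap(K_i)=R_i^{-1}K_{i+1}R_{i+1}$ over $i=0,\dots,N$ together with the analogous transformation rule for $\A_1$ (whose $\mu$-deformation is designed to be compatible, $\A_1(\mu)$ being built from the same scaled invariants) produces $\widehat M(\mu)=R^{-1}M(\mu)R$ for a suitable diagonal matrix $R$. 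Hence the eigenvalues of $M(\mu)$ — the roots in $r$ of $\det(M(\mu)-rI)=0$ — are unchanged by $\Smap$, and since this holds for all $\mu$, the whole spectral curve \eqref{spec} is preserved.

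The one point that needs genuine care, and which I expect to be the main obstacle, is pinning down precisely how the correction factor $\A_1^{-1}K_{-1}$ transforms under the shift. The bulk matrices $K_i$ obey the clean diagonal-gauge rule $\Smap(K_i)=R_i^{-1}K_{i+1}R_{i+1}$ with $R_i$ scaling-invariant, but $\A_1$ and $K_{-1}$ involve the delicate ``end'' invariants $a_{-1},b_{-1},c_{-1}$ and $a_N,b_N$ whose scaling weights were computed separately in Lemma~\ref{one} and Theorem~\ref{scalingth}; one must verify that the $\al,\be$-powers attached to $\A_1^{-1}K_{-1}$ combine with the telescoped $R_0^{-1}(\cdots)R_{N+1}$ to leave exactly a global diagonal conjugation, with no leftover $\mu$-dependent factor. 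I would handle this by using the explicit formulas $a_{-1}=\frac{a_{N-1}^2a_{N-2}A_1A_2}{a_0^2 c_NA_0}$, $b_{-1}=\frac1{a_0}\!\left(\frac{a_{N-1}c_N}{A_1a_0}-1\right)$, $c_{-1}=\frac{a_{N-1}}{a_0}$, together with $a_N=a_1$ and $b_N=\frac{c_N}{a_{N-2}}\!\left(\frac{c_N}{A_1A_2}-1\right)$ from the appendix, checking case-by-case ($N=3s$ vs.\ $N=3s+2$) that the index shifts in Lemma~\ref{lemma1} propagate correctly through the definition $K_i=\rho_i^{-1}\rho_{i+1}$ to give $\widehat{\A_1^{-1}K_{-1}}=R_{N+1}^{-1}(\A_1^{-1}K_{-1})R_0$ with the same $R$. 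Once that bookkeeping is done the conjugacy $\widehat M(\mu)=R^{-1}M(\mu)R$ follows immediately, and the theorem is proved.
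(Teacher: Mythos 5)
Your first paragraph already contains essentially all of the paper's proof: the paper takes the theorem to be ``already proved'' by the discussion preceding it, namely the identity $\rho_0^{-1}M\rho_0=K_0K_1\cdots K_N\A_1^{-1}K_{-1}$ (so that $M(1)$ is conjugate to a lift of the monodromy), the observation that the shift map preserves the monodromy by construction, and the scaling invariance of Theorem \ref{scalingth} together with the scaling invariance of $\al$, $\be$ (hence of $R_i$), which transports the statement from $\mu=1$ to arbitrary $\mu$: $M(\mu)$ evaluated on a spiral is $M(1)$ evaluated on the $\mu$-scaled spiral, and since $\Smap$ commutes with the scaling and preserves the monodromy of whatever spiral it is applied to, the eigenvalues of $M(\mu)$ are invariants for every $\mu$. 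No telescoping is needed.

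The telescoping route you then pursue instead has a concrete problem as written. Multiplying $\Smap(K_i)=R_i^{-1}K_{i+1}R_{i+1}$ over $i=0,\dots,N$ gives $R_0^{-1}K_1\cdots K_{N+1}R_{N+1}$ --- a cyclically rotated product, not $K_0\cdots K_N$. If, as you propose, the shifted boundary matrix were $R_{N+1}^{-1}(\A_1^{-1}K_{-1})R_0$, you would obtain $\widehat M(\mu)=R_0^{-1}\bigl(K_1\cdots K_{N+1}\A_1^{-1}K_{-1}\bigr)R_0$, and $K_1\cdots K_{N+1}\A_1^{-1}K_{-1}$ is not conjugate to $K_0\cdots K_N\A_1^{-1}K_{-1}$ in general. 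To close the rotated product into a conjugation one must invoke the boundary relation $K_{N+1}=\A_1^{-1}K_{-1}\A_2$ and show that the shifted spiral's boundary matrix is a diagonal gauge of the index-advanced matrix $\A_2^{-1}K_0$, so that $K_1\cdots K_{N+1}\A_2^{-1}K_0=K_0^{-1}\bigl(K_0\cdots K_N\A_1^{-1}K_{-1}\bigr)K_0$; it is not a gauge of $\A_1^{-1}K_{-1}$ itself. So either correct that identity and actually carry out the case-by-case verification you defer, or drop the telescoping altogether and rest on the monodromy-plus-scaling argument, which is what the paper does.
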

 Notice that we have not found a Poisson structure  preserved by the shift map. Indeed, the structure associated to the pentagram map 
 \[
 \{a_k, a_{k\pm 3r}\} = \pm a_ka_{k\pm 3r}; \quad  \{b_k, b_{k\pm 3r}\} = \mp b_kb_{k\pm 3r}
 \]
 with all other brackets vanishing, is not preserved by the shift map even if we make $c_N$ a Casimir. This is a consequence of the fact that none of the factors in (\ref{factors}) commute with all $a_k$ or $b_k$ (implying that, for example, $\{S(a_k), S(b_r)\} \ne 0$ in general).  Still, we do have a good partial integration since many invariants of the map can be found analyzing the coefficients of the different powers of $\mu$ and $r$ in (\ref{spec}), which are all invariants. 
 
 Work in MAPLE shows that trace$(M(\mu))$ is a polynomial in $\mu$ containing only powers of $\mu$ in intervals of 3 (the same is true for the pentagram map). Likewise, not all powers appear in (\ref{spec}).  For example, if $N = 5$, (\ref{spec}) contains the following powers:
 \begin{eqnarray*}
&&\det(M(\mu) - r I) \\&=& I_0 + r \mu^{-7}I_1+r\mu^{-4} I_2+r^2\mu^{-2} I_3+r\mu^{-1}I_4+\mu^2 rI_5+\mu r^2I_6+\mu^{4}r^2I_7
 \end{eqnarray*}
where
\[
I_0 = 1, \quad I_1 = \frac{b_0b_1b_2b_3b_4}{a_4a_3a_0A_1^2A_2}(A_1A_2-c_5)(a_4c_5-a_0A_1) 
\]
\[
I_7 = a_1a_2a_3a_4, \quad I_5= \frac 1{A_1a_3c_5}(a_1a_3A_1^2+c_5^2a_4a_2)  
\]
\begin{eqnarray*}
I_3&=&\frac1{a_0}(b_1A_4+b_4(A_2+a_0b_2))-\frac1{A_1a_3}c_5(b_0+b_3A_1)\\
&-&\frac{A_1}{c_5a_4}(b_1+b_4A_2)+\frac{c_5^2}{A_1^2A_2a_3}(b_3A_1+b_0A_3)
\end{eqnarray*}
\[
I_6 = \frac 1{A_1A_2a_0}\left(a_0a_2c_5^2+a_4A_1A_2(a_3A_2+a_0(1+a_1b_3+a_3b_2))+A_1A_2a_0a_1(1+a_1b_4)\right)
\]
 while the remaining two are longer to write.
 
 One can directly see that $I_7$ is preserved. Indeed, from either (\ref{ab1}) or (\ref{ab2}) we have
 \[
 \Smap(a_1 a_2a_3a_4) = \ha_2\ha_3\ha_4\ha_5 = a_2a_3a_4a_5 = a_1a_2a_3a_4
 \]
 since $a_5 = a_N=a_1$.

  We have a final observation. In \cite{Schspiral} the author showed that the shift map could be viewed as the $(N+1)$th root of the pentagram map in a certain sense (notice that the domains are different). Recall (\cite{OST}) that in our notation the pentagram map is defined as
  \[
  T(e_i) = e_{i+2} \frac{[E_{i+2}, \dots, E_{i+N}]}{[E_{i-N+4}, \dots, E_{i-1}]}, \quad T(f_i) = f_{i-1}\frac{[E_{i-N},\dots,E_{i-5}]}{[E_{i+1},\dots,E_{i+N-4}]}
  \]
  for $N = 3m+2$, where $\{e_i,f_i\}$ are the twisted polygons coordinates we described in (\ref{penta}); and where the brackets $[\dots]$ denote the product of every three factors, $[E_2,\dots, E_N] = E_2E_5\dots E_{N-3}E_N$, with $E_i = 1+e_i f_{i-1}$ (same definition as $A_i$ but with polygons invariants instead of spiral invariants). This expressions are non-local, even though the map clearly is. The coordinates $e_i$ and $f_i$, (denoted by $a_i$ and $b_i$ in \cite{OST}), are mostly multiples, but not equal, to the spiral coordinates. As before, they have in common most of the normalizations that define them, except for beginning and end. The study here shows that in $a_i$, $b_i$ coordinates the spiral is local around the first vertex. This seems to indicate that the non-local terms in the pentagram map formula are the accumulation of $N+1$ factors, each one local, produced by the spiral.  This is reinforced by the fact that 
  \[
  \Smap(A_i) = A_{i+1}
  \]
  for $i=0,1,\dots,N-2$, and the fact that consecutive applications of $\Smap$ are calculated by shifting and multiplying by factors that are the $\Smap$ image of $c_N$, $\displaystyle\frac{A_1}{A_3A_0}$ and $\displaystyle\frac{A_3A_0}{A_1c_N}$, in succession. To make this statement precise one would need a careful study of the relation between the coordinates in the different moduli spaces, a calculation similar to the one in the proof of our appendix lemma, and one that we prefer not to include here. 
  
  Notice also that one can attempt to change the coordinates from $\{a_i, b_i$, $i=0,\dots,N-1$, $c_N\}$ to $\{A_i = 1+a_i b_{i-1}$, $B_i = 1+b_ia_{i-2}$, $i=0,\dots,N-1$, $c_N\}$, which might be possible for some values of $N$. In these coordinates the map looks like
  \[
  \Smap(A_i) = A_{i+1}, \quad \Smap(B_i) = B_{i+1}
  \]
  for $i=0,\dots,N-2$, but the images of $A_{N-1}$ and $B_{N-1}$ are not only non-local, but highly complicated and it does not seem to shed much light to the problem, we will not include further details.
  \appendix
  \section{Proofs}
  \subsection{Proof of theorem \ref{invariants}}
 First of all, let us write formulas for the liftings $T(V_i)$ and $\T(V_i)$.

From the lift definition
\begin{eqnarray*}
T(V_i) &=& (V_{i-1}\times V_{i+1})\times (V_i\times V_{i+2}) \\&=& \left(V_{i-1}\times V_{i+1}\right)\times(V_i\times (a_{i-1}V_{i+1}+b_{i-1}V_i+c_{i-1}V_{i-1}))
\\
&=&\det\rho_{i-1} (a_{i-1}V_{i+1}+c_{i-1}V_{i-1})
\end{eqnarray*}
for $i=0,1,\dots, N$, where we have used the rule $(a\times b)\times(b\times c) = \det(a,b,c) b$. From condition (\ref{normalization}) we see that $\det \rho_i = 1$, $i=0,\dots, N$. 

Notice that, from (\ref{spiralrel}) one also obtains
\begin{equation}\label{tform}
T(V_i) =\det\rho_{i-1} (a_{i-1}V_{i+1}+c_{i-1}V_{i-1})  =  \det\rho_{i-1}(V_{i+2} -b_{i-1} V_{i}) 
\end{equation}
for any $i=0,1,2,\dots,N$.

Analogously we can write a formula for $\T(V_i)$. Indeed, from the lift definition
\begin{eqnarray*}
\T(V_i) &=& c_{i+1}\left(V_i\times V_{i+1}\right)\times\left(V_{i-2}\times V_{i-1}\right) \\&=& c_{i+1}\left(V_i\times (a_{i-2}V_i+b_{i-2}V_{i-1}+c_{i-2}V_{i-2})\right)\times\left(V_{i-2}\times V_{i-1}\right)\\ &=& c_{i+1}\det\rho_{i-2}(b_{i-2}V_{i-1}+c_{i-2}V_{i-2}),
\end{eqnarray*}
which, as before, can be rewritten as
\begin{equation}\label{tbform}
\T(V_i) = c_{i+1}\det\rho_{i-2}(b_{i-2}V_{i-1}+c_{i-2}V_{i-2}) = c_{i+1}\det\rho_{i-2}(V_{i+1}- a_{i-2}V_{i}).
\end{equation}

The first point to notice is that the set $\{\ag_i, \b_i, \cg_i\}_{i=-\infty}^\infty$ determines the spiral up to a projective transformation. This is clear since we can write

\begin{equation}\label{spiralSF}
\rho_{i+1}= \rho_i \begin{pmatrix}0&0& \cg_i\\ 1&0&\b_i\\ 0&1&\ag_i\end{pmatrix} = \rho_i K_i
\end{equation}
for any $i$, where $\rho_i = (V_i, V_{i+1}, V_{i+2})$, and hence $\rho_0$ and $\{\ag_i, \b_i, \cg_i\}_{i=-\infty}^\infty$ determine the spiral. Therefore, to prove the theorem we need to show that  $\{\ag_i, \b_i, \cg_i\}_{i=-\infty}^\infty$ is generated only by $\G = \{\{\ag_i, \b_i\}_{i=0}^{N-1}, c_{N}\}$. 

We know that $\cg_i = 1$ for $i=0,1,2,\dots, N-1$. Also, since $\rho_{N+1} = \rho_N K_N$ and $\det K_N = c_N$, we have that
\[
c_N = \det \rho_{N+1}.
\]
The following lemma will show that both $K_{N+i}$ and $K_{-i}$ are shifted gauge transformations of lower and upper indexed invariants, respectively.
\begin{lemma}
For $i = 1,2,\dots$ 
\begin{equation}\label{gauge1} 
K_{N+i} = \A_i^{-1} K_{i-2}\A_{i+1},
\end{equation}
where $\A_i$ is local and depends only on $a_j, b_j, c_j$, $j=i, i-1, i-2$. We also have 
\begin{equation}\label{gauge2}
K_{-i} = \B_{-i}^{-1} K_{N-i-1}\B_{-i+1}
\end{equation}
for any $i=4,\dots, N$, with $\B_{-i}$ also local depending only on $a_j, b_j$ with $j=N-i, N-i-1$, $b_{N-i+1}$ and $c_r$, $r=N-i+4, \dots, N-i-1$. 
\end{lemma}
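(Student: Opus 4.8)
The plan is to reduce both identities to a single explicit computation of the frames built out of $T$-images and $\T$-images of three consecutive vertices. The starting point is the two formulas (\ref{tform}) and (\ref{tbform}) for $T(V_i)$ and $\T(V_i)$ derived just above, together with the normalization $\det\rho_i=1$ for $0\leq i\leq N$. The key observation is that, since $V_{N+i}=MT(V_{i-1})$, the frame $\rho_{N+i}=(V_{N+i},V_{N+i+1},V_{N+i+2})$ equals $M\tau_i$ with $\tau_i:=\bigl(T(V_{i-1}),T(V_i),T(V_{i+1})\bigr)$; therefore the monodromy cancels in $K_{N+i}=\rho_{N+i}^{-1}\rho_{N+i+1}=\tau_i^{-1}\tau_{i+1}$, and the whole problem reduces to writing $\tau_i$ in terms of nearby frames $\rho_j$. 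Symmetrically, from $V_{-i}=M^{-1}\T(V_{N-i+1})$ one gets $\rho_{-i}=M^{-1}\sigma_{N-i+2}$ with $\sigma_j:=\bigl(\T(V_{j-1}),\T(V_j),\T(V_{j+1})\bigr)$, so $K_{-i}=\sigma_{N-i+2}^{-1}\sigma_{N-i+3}$.

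For (\ref{gauge1}) I would expand each column of $\tau_i$ in the basis formed by the columns of $\rho_{i-2}=(V_{i-2},V_{i-1},V_i)$. Using (\ref{tform}) and $\det\rho_{i-2}=\det\rho_{i-1}=\det\rho_i=1$ (which holds for $2\leq i\leq N$), one has $T(V_{i-1})=a_{i-2}V_i+c_{i-2}V_{i-2}$, $T(V_i)=a_{i-1}V_{i+1}+c_{i-1}V_{i-1}$ and $T(V_{i+1})=a_iV_{i+2}+c_iV_i$; substituting $V_{i+1}$ and $V_{i+2}$ from (\ref{spiralrel}) rewrites these as combinations of $V_{i-2},V_{i-1},V_i$ alone, with coefficients that are polynomials in $a_j,b_j,c_j$ for $j\in\{i-2,i-1,i\}$ (the combination $c_{i-1}+a_{i-1}b_{i-2}=A_{i-1}$ appears in the process). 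This yields $\tau_i=\rho_{i-2}\A_i$ for an explicit local matrix $\A_i$. Then $\tau_{i+1}=\rho_{i-1}\A_{i+1}=\rho_{i-2}K_{i-2}\A_{i+1}$, so $K_{N+i}=\tau_i^{-1}\tau_{i+1}=\A_i^{-1}K_{i-2}\A_{i+1}$. The case $i=1$ is the same computation except that $\det\rho_{-1}=c_{-1}^{-1}$ replaces $1$ in the first column, which only adds a dependence of $\A_1$ on $c_{-1}$, i.e.\ on the invariants at index $j=-1$, consistent with the statement; I would record $\A_1$ explicitly, as it is the matrix entering the monodromy relation $\rho_0^{-1}M\rho_0=K_0\cdots K_N\A_1^{-1}K_{-1}$ used later.

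For (\ref{gauge2}) the argument is the mirror image: expand each column of $\sigma_j$ in the basis of $\rho_{j-3}=(V_{j-3},V_{j-2},V_{j-1})$. From (\ref{tbform}) and $\det\rho_{j-2}=1$ one gets $\T(V_j)=c_{j+1}\bigl(b_{j-2}V_{j-1}+c_{j-2}V_{j-2}\bigr)$, and re-expressing $V_j$ via (\ref{spiralrel}) gives $\sigma_j=\rho_{j-3}\B_j$ with $\B_j$ explicit, its entries polynomials in the $c$'s near $j$ and in $b_{j-1},b_{j-2},b_{j-3}$ and $a_{j-3}$ (now $c_{j-1}+b_{j-1}a_{j-3}=B_{j-1}$ shows up). Writing $\sigma_{N-i+3}=\rho_{N-i}\B_{N-i+3}=\rho_{N-i-1}K_{N-i-1}\B_{N-i+3}$, one obtains $K_{-i}=\B_{N-i+2}^{-1}K_{N-i-1}\B_{N-i+3}$, so $\B_{-i}:=\B_{N-i+2}$ does the job; reading off which invariants occur in $\B_{N-i+2}$ reproduces the stated dependence on $a_{N-i},a_{N-i-1}$, $b_{N-i+1},b_{N-i},b_{N-i-1}$ and $c_r$ with $N-i-1\leq r\leq N-i+4$. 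The restriction $4\leq i\leq N$ is exactly what guarantees that all these $c_r$ fall within $c_0,\dots,c_N$ (so no higher-index, non-generating invariants intrude) and that the normalizations $\det\rho_{j-2}=1$ apply without correction at the indices used.

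The computations are routine linear algebra — repeated use of $(a\times b)\times(b\times c)=\det(a,b,c)\,b$ and of (\ref{spiralrel}) — so the real content is organizational. The main obstacle is the boundary bookkeeping: identifying precisely for which $i$ the clean matrices $\A_i,\B_{-i}$ have the form above versus when one picks up an extra (still rational) scalar such as $c_{-1}^{-1}$ or $c_N$, verifying that the claimed index ranges are correct, and observing that for $i$ outside the clean range the same gauge relations persist after iterating the recursion. That iteration, feeding the $K_{N+i}$ produced here back into the next $\A$-matrix, is in fact what ultimately expresses every $K_j$ — hence the full spiral — in terms of the finite set $\G=\{\{a_i,b_i\}_{i=0}^{N-1},c_N\}$.
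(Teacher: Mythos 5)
Your proposal is correct and follows essentially the same route as the paper: expressing $T(V_j)$ and $\T(V_j)$ as combinations of nearby $V$'s via (\ref{tform})--(\ref{tbform}), writing $\rho_{N+i}=M\rho_{i-2}\A_i$ and $\rho_{-i}=M^{-1}\rho_{N-i-1}\B_{-i}$ with explicit local matrices, and conjugating through $\rho_{i-1}=\rho_{i-2}K_{i-2}$. Your explicit handling of the $i=1$ boundary case via $\det\rho_{-1}=c_{-1}^{-1}$ is a detail the paper leaves implicit, but the argument is the same.
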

\begin{proof}

 Let's denote $d_i = \det \rho_i$. From (\ref{tform}), if $i = 1,2,\dots,N$, we have

\[
T(V_i) = d_{i-1}(a_{i-1}V_{i+1} + c_{i-1}V_{i-1}) = d_{i-1}\rho_{i-1}\begin{pmatrix}c_{i-1}\\0\\ a_{i-1}\end{pmatrix}
\]
and, since $\rho_{i-1} = \rho_{i-2} K_{i-2}$, with $K_i$ as in (\ref{spiralSF}), we can write
\[
T(V_i) = d_{i-1}\rho_{i-2} K_{i-2}\begin{pmatrix}c_{i-1}\\0\\ a_{i-1}\end{pmatrix} =d_{i-1}\rho_s K_sK_{s+1} \dots K_{i-2}\begin{pmatrix}c_{i-1}\\0\\ a_{i-1}\end{pmatrix},
\]
For any $s$. This is true for $i = 0,1,2,\dots$. Using these relations and straightforward calculations
\begin{eqnarray*}
\rho_{N+i+1} &=& M(T(V_{i}), T(V_{i+1}),T(V_{i+2}))\\ &=&M\rho_{i-1}\A_{i+1}
\end{eqnarray*}
$i=0,1,2,\dots$, where
\begin{equation}\label{ai1}
\A_{i+1}= \begin{pmatrix} d_{i-1}\begin{pmatrix}c_{i-1}\\0\\ a_{i-1}\end{pmatrix}& d_i K_{i-1} \begin{pmatrix}c_i\\0\\ a_{i}\end{pmatrix}& d_{i+1}K_{i-1}K_i\begin{pmatrix}c_{i+1}\\0\\ a_{i+1}\end{pmatrix}\end{pmatrix}.
\end{equation}
(Recall that  $V_{N+1}= M T(V_0)$, but $V_0 \ne M^{-1} \T(V_{N+1})$.)

We also have
\[
\rho_{N+i+1}=\rho_{N+i} K_{N+i}
\]
and as above
\begin{equation}\label{rhoi}
\rho_{N+i} =M\rho_{i-2}\A_{i}
\end{equation}
for $i = 1,2,3,\dots, N$. Putting these three relations together we get that $K_{N+i}$ is a gauged transformation of $K_{i-2}$ by the matrix $\A_i$
\[ 
K_{N+i} = \A_i^{-1} K_{i-2}\A_{i+1},
\]
$i = 1,2,\dots$.

A very similar argument can be made for negative indices. Indeed, $V_{-i} = M^{-1}\T(V_{N-i+1})$, $i=1,2,\dots$, where
\[
\T(V_i) =  c_{i+1}d_{i-2}(b_{i-2}V_{i-1}+c_{i-2}V_{i-2}) = c_{i+1}d_{i-2}\rho_{i-2}\begin{pmatrix}c_{i-2}\\ b_{i-2}\\0\end{pmatrix}.
\]
As before, 
\begin{equation}\label{rhomi}
\rho_{-i} = M^{-1}(\T(V_{N-i+1}), \T(V_{N-i+2}), \T(V_{N-i+3})) = M^{-1}\rho_{N-i-1}\B_{-i}
\end{equation}
for any $i\ge 3$, where
\begin{multline}\label{Bmi}
\B_{-i} =
\left(\begin{matrix}
c_{N-i+2}d_{N-i-1}\begin{pmatrix} c_{N-i-1}\\ b_{N-i-1}\\ 0\end{pmatrix}
& c_{N-i+3}d_{N-i}K_{N-i-1}\begin{pmatrix} c_{N-i}\\ b_{N-i}\\ 0\end{pmatrix}
\end{matrix}\right.  \\
\left.\begin{matrix}
& c_{N-i+4}d_{N-i+1}K_{N-i-1}K_{N-i}\begin{pmatrix} c_{N-i+1}\\ b_{N-i+1}\\ 0\end{pmatrix}
\end{matrix}\right).
\end{multline}
And since 
\[
\rho_{-i+1} = M^{-1}\rho_{N-i}\B_{-i+1}, \quad \rho_{-i+1} = \rho_{-i} K_{-i}
\]
for $i\ge 4$, we have that 
\[
\rho_{N-i}\B_{-i+1} = \rho_{N-i-1}K_{N-i-1}\B_{-i+1} = \rho_{N-i-1}\B_{-i} K_{-i}.
\]
Subsequently, the matrix $K_{-i}$ is a gauge transformation of $K_{N-i-1}$ by the matrix $\B_{-i}$
\[
K_{-i} = \B_{-i}^{-1} K_{N-i-1}\B_{-i+1}
\]
for any $i=4,\dots, N$. 
\end{proof}

\begin{proof} {\it of theorem \ref{invariants}}. 
The lemma allows us to conclude that the higher indexed invariants $K_{N+i}$, $i=1,2, 3, \dots, N$, can be written in terms of  $\G$, $K_N$ and $K_{-1}$, while those with negative subindex  can be written as functions of $\G$ and $K_{-1}$. In particular, straightforward calculations (these calculations are double checked with the computer) show that $K_{N+1} = \A_1^{-1}K_{-1}\A_2$ is given by
\begin{equation}\label{Kn}
K_{N+1} = \begin{pmatrix}\displaystyle0&0& c_{N+1}\\ 1&0& b_{N+1}\\ 0&1& a_{N+1}\end{pmatrix} = \begin{pmatrix}\displaystyle 0&0&\displaystyle \frac{c_{-1} A_2}{A_0}\\ 1&0&\displaystyle \frac{b_{-1} A_2}{A_0}\\ 0&1&a_2\end{pmatrix}
\end{equation}
where $A_i = c_i + a_i b_{i-1}$ for any $i$.

We are now left with proving that $K_{-3}$, $K_{-2}$, $K_{-1}$, $a_N$ and $b_N$ are functional combinations of  $\G$, and we will have proven the theorem. 
We start with the simplest ones, the matrices $K_{-2}$ and $K_{-3}$.
From (\ref{rhomi}) we see that 
\[
\rho_{-3} = M^{-1}\rho_{N-4} \B_{-3}.
\]
But our formula changes for either $\rho_{-2}$ or $\rho_{-1}$; we have
\[
\rho_{-1} = ( M^{-1}\T(V_{N}), V_0, V_1), \quad \rho_{-2} = (M^{-1}\T(V_{N-1}), M^{-1}\T(V_N), V_0).
\]
We also have,
\[
\T(V_{N-1}) = c_{N} \rho_{N-3}\begin{pmatrix}c_{N-3}\\ b_{N-3}\\ 0\end{pmatrix}. 
\]
From (\ref{tbform}), and using (\ref{spiralSF}) and (\ref{rhoi})
\[
\rho_{N-3}= \rho_{N+1}K_N^{-1}K_{N-1}^{-1}K_{N-2}^{-1}K_{N-3}^{-1} = M\rho_{-1} \A_1K_N^{-1}K_{N-1}^{-1}K_{N-2}^{-1}K_{N-3}^{-1}. 
\]
Putting these formulas together with the formula for $\rho_{-2}$, we obtain
\[
\rho_{-2} = \rho_{-1} \C_{-2}
\]
where 
\begin{equation}
\C_{-2} =
\left(\begin{matrix} c_{N}\A_1K_N^{-1}K_{N-1}^{-1}K_{N-2}^{-1}K_{N-3}^{-1}\begin{pmatrix} c_{N-3}\\ b_{N-3}\\ 0\end{pmatrix}&
 e_1& e_2 \end{matrix}\right).
\end{equation}

Given that $\rho_{-1} = \rho_{-2} K_{-2}$, we conclude that $K_{-2} = \C_{-2}^{-1}$. Also, since $\A_1$ involves only $K_0$, $K_1$, $K_{-1}$, we conclude that $K_{-2}$ is a function of the generators $\G$, $K_N$ and $K_{-1}$. We can also use this same process to study $K_{-3}$. On the one hand, from (\ref{rhoi}) we have
\begin{eqnarray*}
\rho_{-3} &=& M^{-1}\rho_{N-4} \B_{-3} = M^{-1}\rho_{N+1}K_N^{-1}K_{N-1}^{-1}K_{N-2}^{-1}K_{N-3}^{-1}K_{N-4}^{-1}\B_{-3} \\&=& \rho_{-1}\A_1K_N^{-1}K_{N-1}^{-1}K_{N-2}^{-1}K_{N-3}^{-1}K_{N-4}^{-1}\B_{-3}.
\end{eqnarray*}
Also
\[
\rho_{-2} = \rho_{-3} K_{-3} = \rho_{-1}\C_{-2}
\]
and hence
\[
\C_{-2} = \A_1K_N^{-1}K_{N-1}^{-1}K_{N-2}^{-1}K_{N-3}^{-1}K_{N-4}^{-1}\B_{-3}K_{-3}
\]
which proves that $K_{-3}$ is a function of $\G$,  $K_N$ and $K_{-1}$.

We are now down to $K_{-1}$ and $K_{N}$. 
Our next step is to use the condition imposed on the seed of the twisted spiral, namely $p_{N+1} = MT(p_0)$ being in the segment joining $p_N$ and $Mp_1$. This means the lift $MT(V_0) = V_{N+1}$ is in the homogeneous plane containing $MV_1$ and $V_N$, and from here
\[
\det(V_{N+1}, MV_1, V_N) = \det(MT(V_0), MV_1, V_N)= 0.
\]
Expanding as before (we will skip the details since it is the same type of calculation as above), we obtain
\begin{equation}\label{aN}
a_N = a_1. 
\end{equation}
Working with the extra relation 
\[
\det(MV_0, V_N, V_{N-1}) = 0\]
(which we can verify by simply observing fig. 3), and after some manipulations of the type we did before, we get
\begin{equation}\label{cm1}
c_{-1} = \frac{a_{N-1}}{a_0}.
\end{equation}

As  before
\begin{eqnarray*}
\T(V_{N}) &=& c_{N+1}\rho_{N-2}\begin{pmatrix}1\\ b_{N-2}\end{pmatrix} = c_{N+1}\rho_{N+1}K_N^{-1}K_{N-1}^{-1}K_{N-2}^{-1}\begin{pmatrix}1\\ b_{N-2}\\ 0\end{pmatrix} \\ &=& c_{N+1} M\rho_{-1}\A_1 K_N^{-1}K_{N-1}^{-1}K_{N-2}^{-1}\begin{pmatrix}1\\ b_{N-2}\\ 0\end{pmatrix}
\end{eqnarray*}
while 
\[
\rho_{-1} = (M^{-1} \T(V_N), V_0, V_1) = \rho_0 K_{-1}^{-1}.
\]
Thus, 
\[
K_{-1}^{-1} e_1 = \begin{pmatrix}\displaystyle -\frac{b_{-1}}{c_{-1}}\\\displaystyle -\frac{a_{-1}}{c_{-1}}\\ \displaystyle\frac 1{c_{-1}}\end{pmatrix} = c_{N+1}K_{-1}^{-1} \A_1 K_N^{-1}K_{N-1}^{-1}K_{N-2}^{-1} \begin{pmatrix} 1\\ b_{N-2}\\ 0\end{pmatrix}.
\]
Straightforward calculations show that these three equations reduce to the two relations
\[
c_N = (c_N+ b_N a_{N-2}) c_{N+1};  \quad a_{-1} = c_{N+1}\frac{c_{-1}a_{N-2}(1+a_1b_{0})}{c_{N}} = \frac{c_{N+1}a_{N-1}a_{N-2}}{c_Na_0}(1+a_1b_0),
\]
Which solves for $c_{N+1}$ and $a_{-1}$ in terms of generators $\G$ and $b_N$. But from (\ref{Kn}) we get
\[
c_{N+1} = \frac{c_{-1} A_2}{A_0}, \]
and so, using $A_i = c_i + a_ib_{i-1}$
\begin{equation}\label{rel}
\frac{c_{-1}}{1+a_0b_{-1}} = \frac{c_N}{(c_N+b_Na_{N-2})(1+a_2b_1)}
\end{equation}
which together with (\ref{cm1}) solves for $b_{-1}$ in terms of $\G$ and $b_N$.

Finally, $\rho_{N+1} = \rho_N K_N$, so that 
\[
c_N  = \det \rho_{N+1} = \det\rho_{-1}\det \A_1 = \frac1{c_{-1}}{A_0A_1} = \frac{(1+a_1b_0)(1+a_0b_{-1})}{c_{-1}}
\]
and so, from here and (\ref{rel}) we have 
\begin{equation}\label{rel2}
c_{-1} = \frac1{c_N} A_0A_1; \quad\frac{c_{-1}}{1+a_0b_{-1}} = \frac{1+a_1b_0}{c_N} = \frac{c_N}{(c_N+b_Na_{N-2})(1+a_2b_1)}.
\end{equation}
which allow us to solve for $b_N$ in terms of $\G$ only.

\end{proof}
\subsection{Proof of lemma \ref{lemma1}}
\begin{comment}{\rm  The arbritary lifts $\{\tV_i\}$ that will be used in the proof of this lemma are only arbitrary for $i=1,2,\dots,N+1$, while $\tV_{N+i} = M\cdot \widetilde{T(V_{i-1})}$, $i=2,3,\dots$, is obtained by substituting $\tV_j$ in the definition of $T(V_{i-1})$ in the statement of theorem \ref{lift}. Likewise for $\tV_{-i} = M^{-1}\cdot \widetilde{\T(V_{N-i+1})}$, $i=1,2,\dots$. But $\tV_0$ will be different for the spiral and its shift since the seed of the shifted spiral does not include $p_1$, {\it which is the reason why  $\hV_{N+1} \ne M T(\hV_0)$, but rather $\hV_0 = M^{-1}\T(\hV_{N+1})$}, the same way we defined $V_{-1}$ for the unshifted spiral. Therefore, we will use $\tV_{N+1} = MT(\tV_0)$ for the spiral and $\tV_0 = M^{-1}\T(\tV_{N+1})$ for the shifted one.  }
\end{comment}

\begin{proof}
From theorem \ref{lift}, there are some proportions $\l_i$ and $\hat\l_i$ such that 
\[
V_i = \l_i \V_i,\quad i=0,\dots, N, \quad \hV_i = \hl_i \V_i,\quad i=1,\dots, N+1
\]
for some arbitrary common lift $\{\tilde V_i\}$.

The relation between $\l_i$ and $\hl_i$ can be found as follows: both of them satisfy the same normalizations  with $i = 2,3,\dots, N$, the difference being the extra equation added to these ($i=1$ for $\l_i$ and $i=N+1$ for $\hl_i$) and the different definitions of $\l_i$ at the boundary. Using {\it only the equations and definitions they have in common}, we can solve for all $\l_i$'s in terms of $\l_{N}$ and $\l_{N+1}$, and then use either $i=1$ or $i=N+1$ and the particular definitions at the boundaries to solve for $\l_N$ and $\l_{N+1}$ or $\hl_N$ and $\hl_{N+1}$, respectively. Although we can use general arguments to prove the theorem, we will need to know with precision how $\l_i$ and $\hl_i$ depend on $\l_N$, $\l_{N+1}$ and $\hl_N$, $\hl_{N+1}$, so we go into the details. 

 We define $\l_{N+i} = \l_{i-2}\l_{i-1}\l_i\l_{i+1}$ for $i \ge 1$ so that $V_{N+i} = \l_{N+i} \tV_{N+i}$ with 
\[
M^{-1} \tV_{N+i} = (\tV_{i-2}\times\tV_i)\times(\tV_{i-1}\times\tV_{i+1})
\]
Defined by this formula. We define also $\l_{-1}$ as the proportion satisfying $V_{-1} = \l_{-i}\tV_{-1}$ with $\tV_{-1}$ also defined using the definition of $\T$ in (\ref{TinvV}) on $\{\tV_i\}$.  We also define $\hl_{N+k} = \hl_{k-2}\hl_{k-1}\hl_k\hl_{k+1}$ for $k\ge 2$ with $\hl_{N+1}$ this time independent from other lambdas, and $\hl_0$ given by the relation $\hV_0 = \hl_0M^{-1}\T(\tV_{N+1})$. (Because of the shifting of the seed we have $V_{N+1} = MT(V_0)$, while $\hV_0 = M^{-1}\T(\hV_{N+1})$.)

 The values of $\l_i$  are determined by the equations 
\[
\det(V_i, V_{i+1}, V_{i+2}) = 1\]
for $i = 0,1,\dots,N$, and the proportions $\hl_i$ are determined by the equations
\[
\det(\hV_i, \hV_{i+1}, \hV_{i+2}) = 1
\]
for $i = 1,2,\dots, N+1$.

Thus, let $\l_i$, $i=0,\dots,N$ be proportions satisfying the common equations
\begin{equation}\label{common}
\l_i\l_{i+1}\l_{i+2} = g_i
\end{equation}
$i=1,2,3,\dots,N$, where $g_i^{-1} = \det(\V_i, \V_{i+1}\V_{i+2}$). 

{\it Case $N = 3s+2$}. If we divide consecutive equations, we have
\begin{eqnarray*}
\l_1&=&\l_4\frac{g_1}{g_2} = \dots = \l_{N+2}\frac{[g_1\dots g_{N-1}]}{[g_2\dots g_{N}]} = \l_0\l_1\l_2\l_3 \frac{[g_1\dots g_{N-1}]}{[g_2\dots g_{N}]} \\ &=& \l_0 g_1\frac{[g_1\dots g_{N-1}]}{[g_2\dots g_{N}]}\\
\l_2&=&\l_5\frac{g_2}{g_3} = \dots = \l_{N}\frac{[g_2\dots g_{N-3}]}{[g_3\dots g_{N-2}]} \\
\l_3&=&\l_6\frac{g_3}{g_4} = \dots = \l_{N+1}\frac{[g_3\dots g_{N-2}]}{[g_4\dots g_{N-1}]} \end{eqnarray*}
where the bracket  represents the product of every third function, as in $[g_2\dots g_N] = g_2g_5g_8\dots g_N$. We can also use the common relation $\l_1\l_2\l_3 = g_1$ to obtain
\begin{equation}\label{lNrel}
\l_0= \l_N^{-1} \l_{N+1}^{-1} \frac{g_N}{g_1};\quad \l_1 = \l_N^{-1}\l_{N+1}^{-1} H_1;\quad \l_2 = \l_N H_2;\quad \l_3 = \l_{N+1} H_3,
\end{equation}
where $H_i$ depend only on the lift $\{\tV_i\}$ with the exception of $\tV_0$, which is the lift that the spiral and its shift do not share.  Therefore, all $\l_i$ depend on $\l_N$ and $\l_{N+1}$ through their relation to $\l_1, \l_2, \l_3$ above.

From theorem \ref{lift} we know that $\l_N$ and $\l_{N+1}$ will be determined uniquely by the corresponding normalizations, and likewise for $\hl_N$ and $\hl_{N+1}$ (the explicit formulas can also be obtained through straightforward calculations).  

Assume next that 
\[
\hl_N = \al \l_N \hskip 3ex\text{and}\hskip 3ex \hl_{N+1} = \be\l_{N+1}. 
\]
Then, using (\ref{lNrel}) and (\ref{lambdas}) we can prove (\ref{al1})-(\ref{al12}) straightforwardly with some case-by-case consideration.  The value of $\alpha$ and $\be$ are  found through two relations, each one coming from one end of the seed: the first one is
\begin{eqnarray*}
1 &=& \det(\hV_{N+1}, \hV_{N+2}, \hV_{N+3})  = \hl_{N+1}\hl_{N+2}\hl_{N+3}\det(\tV_{N+1}, \tV_{N+2}, \tV_{N+3}).
\end{eqnarray*}
We have, from (\ref{lNrel})
\[
 \hl_{N+1}\hl_{N+2}\hl_{N+3}=\hl_{N+1}\hl_0\hl_1^2\hl_2^2\hl_3^2\hl_4 = \hl_{N+1}\hl_0\hl_1g_1g_2 = \hl_N^{-2}\hl_{N+1}^{-1} H
 \]
where $H$ depends only on the arbitrary lift (excluding $\tV_0$). Therefore,  undoing this reasoning and going back to the unshifted spiral, we have that 
 \[
1 =  \al^{-2}\be^{-1} \det(V_{N+1}, V_{N+2}, V_{N+3}) = \al^{-2}\be^{-1} \det\rho_{N+1}.
\]
But, as we saw before, $\rho_{N+1} = \rho_N K_N$ with $\det \rho_N = 1$, so $\det\rho_{N+1} = c_N$ as claimed.

The second equation comes from the following observation at the other end of the seed: from (\ref{tbform})
\[
V_0 = \l_0 \tV_0 = \al\be\hl_0\tV_0=\al\be \hV_0 = \al\be M^{-1}\T(\hV_{N+1}) = \al\be M^{-1}\left((\hc_{N+2}(\hb_{N-1}\hV_N +\hV_{N-1})\right),
\]  
but we also have
\begin{eqnarray*}
\hc_{N+2} &=& \frac{\hl_{N+5}}{\hl_{N+2}} \tc_{N+2} = \frac{\l_3\l_4\l_5\l_6}{\l_0\l_1\l_2\l_3 }\tc_{N+2}\\ 
\hb_{N-1} &=& \frac{\hl_{N+2}}{\hl_N} \tb_{N-1} = \frac{\hl_0\hl_1\hl_2\hl_3}{\hl_N}\tb_{N-1},
\end{eqnarray*}
which, implies $\hc_{N+2} = \al\be^2c_{N+2}$ and $ \hb_{N-1} = \al^{-2}\be^{-1} b_{N-1}$. Since $\hl_{N} = \al \l_N$ and $\hl_{N-1} = \al^{-1}\be^{-1} \l_{N-1}$, putting everything together we get
\begin{eqnarray*}
V_0 &=& \al\be^2 M^{-1}\left((c_{N+2}(b_{N-1}V_N +V_{N-1})\right)\\ &=& \al\be^2M^{-1}\T(V_{N+1}) = \al\be^2M^{-1}\T(MT(V_0)).
\end{eqnarray*}
 Thus, {\it $\al\be^2$ measures how the lift  $\T$ fails to be the twisted inverse of the lift of $T$}. But we also have from (\ref{tform}) and (\ref{tbform}) that
 \[
\T(V_{N+1}) =  \T(MT(V_0)) = c_{N+2} \left(MT(V_0)\times MT(V_1)\right)\times\left( V_{N-1}\times  V_N\right).
 \]
Now, $\rho_{N-1} = \rho_{N+1}K_N^{-1} K_{N-1}^{-1}$, and therefore
\begin{eqnarray*}
M^{-1} V_{N-1} &=& M^{-1} \rho_{N-1} e_1 = M^{-1}\rho_{N+1}K_N^{-1} K_{N-1}^{-1}e_1 \\ &=& (T(V_0), T(V_1), T(V_2)) \begin{pmatrix} -a_{N-1}+\frac{\displaystyle b_Nb_{N-1}}{\displaystyle c_N}\\\\ \frac{\displaystyle A_N}{\displaystyle c_{N}}\\\\ -\frac{\displaystyle b_{N-1}}{\displaystyle c_N}\end{pmatrix}.
\end{eqnarray*}
Likewise
\[
M^{-1} V_N =  (T(V_0), T(V_1), T(V_2))\begin{pmatrix}-\frac{\displaystyle b_N}{\displaystyle c_{N}}\\\\-\frac{\displaystyle a_N}{\displaystyle c_N}\\\\ \frac 1{\displaystyle c_N}\end{pmatrix}
\]
and hence, after long but straightforward calculations using $a_N = a_1$ and $c_{-1} =\displaystyle \frac{a_{N-1}}{a_0}$, we get
\[
M^{-1}V_{N-1}\times M^{-1} V_N = \frac{A_1A_0}{c_N} V_0\times V_1 - \frac{b_N A_0}{c_Nc_{-1}} V_2\times V_0.
\]
We also have, from (\ref{tform})
\[
T(V_0)\times T(V_1) = \frac{A_0}{c_{-1}} V_2\times V_0.
\]
Finally, also using (\ref{tform}) and straightforward calculations we get
\[
c_{N+2} = \frac{\det(T(V_2), T(V_3), T(V_4))}{\det(T(V_1), T(V_2), T(V_3))} = \frac{A_2A_3}{A_1A_2} = \frac{A_3}{A_1}.
\]
Putting everything together we have
\[
M^{-1}\T(M T(V_0)) = c_{N+2} \frac{A_0^2A_1}{c_{-1}c_N} \left(V_2\times V_0\right)\times\left(V_0\times V_1\right) = \frac{A_3A_0^2}{c_{-1}c_N} V_0
\]
and hence $\al^{-1}\be^{-2} = \frac{\displaystyle A_3A_0^2}{\displaystyle c_{-1}c_N}$ as stated. Notice that the computation above is the same for $N = 3s$, only the powers of $\al$ and $\be$ change.

These two relations determine $\al$ and $\be$ generically. Finally, since $\hV_i = \displaystyle\frac{\hl_i}{\l_i} V_i$, $i=1,\dots,N$, using (\ref{lNrel}) we prove the lemma. 

{\it Case $N = 3s$}. The proof in this case is identical, but we use different equations for $\l_i$. Indeed, the systems of equations derived from the normalizations are now given by
\begin{eqnarray*}
\l_1&=&\l_4\frac{g_1}{g_2} = \dots = \l_{N+1}\frac{[g_1\dots g_{N-1}]}{[g_2\dots g_{N}]} \\
\l_2&=&\l_5\frac{g_2}{g_3} = \dots = \l_{N+2}\frac{[g_2\dots g_{N-3}]}{[g_3\dots g_{N-2}]} =\l_0g_1\frac{[g_2\dots g_{N-3}]}{[g_3\dots g_{N-2}]}\\
\l_3&=&\l_6\frac{g_3}{g_4} = \dots = \l_{N}\frac{[g_3\dots g_{N-2}]}{[g_4\dots g_{N-1}]} 
\end{eqnarray*}
and equation $\l_1\l_2\l_3 = g_1$ becomes also
\[
\l_0 = \l_N^{-1}\l_{N+1}^{-1} \frac{g_N}{g_1}.
\]
When we put them together as before we get
\[
\l_0 = \l_N^{-1}\l_{N+1}^{-1}F_0; \quad \l_1 = \l_{N+1} F_1, \quad \l_2 = \l_N^{-1}\l_{N+1}^{-1}F_2; \quad \l_3 = \l_N F_3,
\]
where again $F_i$ depends only on the arbitrary lift excluding $\tV_0$. If 
\[
\hl_N = \al \l_N,\quad\quad \hl_{N+1} = \be \l_{N+1},
\]
 we obtain  the relations (\ref{al2}) and (\ref{al22}) following the same reasoning as in the previous case. One can also check that
 \[
 V_0 = \al^{-2}\be^{-1}M^{-1}\T(MT(V_0)).
 \]

\end{proof}

\end{document}